\theoremstyle{plain}
\newtheorem{thm}[equation]{Theorem}
\newtheorem{prop}[equation]{Proposition}
\newtheorem{lem}[equation]{Lemma}
\newtheorem{lemma}[equation]{Lemma}
\newtheorem{cor}[equation]{Corollary}
\newtheorem{caution}[equation]{Caution}
\theoremstyle{definition}
\newtheorem{remark}[equation]{Remark}
\newtheorem{defn}[equation]{Definition}
\newtheorem{exercise}[equation]{Exercise}
\numberwithin{equation}{section}
\numberwithin{figure}{subsection}
\newcommand{\C}{\CC}
\newcommand{\Q}{\QQ}
\newcommand{\R}{\RR}
\newcommand{\Z}{\ZZ}
\newcommand{\CC}{{\mathbb C}}
\newcommand{\QQ}{{\mathbb Q}}
\newcommand{\RR}{{\mathbb R}}
\newcommand{\ZZ}{{\mathbb Z}}
\DeclareMathOperator{\GL}{GL}
\DeclareMathOperator{\SL}{SL}
\newcommand{\la}{\langle}
\newcommand{\ra}{\rangle}
\newcommand{\sm}[4]{\left(\begin{smallmatrix} #1 & #2 \\ #3 & #4 \end{smallmatrix}\right)}
\newcommand{\lm}[4]{\begin{pmatrix} #1 & #2 \\ #3 & #4 \end{pmatrix}}
\DeclareMathOperator{\sgn}{sgn}
\newcommand{\genlegendre}[4]{%
  \genfrac{(}{)}{}{#1}{#3}{#4}%
  \if\relax\detokenize{#2}\relax\else_{\!#2}\fi
}
\newcommand{\leg}[3][]{\genlegendre{}{#1}{#2}{#3}}
\newcommand{\tleg}[3][]{\genlegendre{1}{#1}{#2}{#3}}
\begin{document}
\title{An elementary proof of Newman's eta-quotient theorem}
\author{David Savitt}
\address{Department of Mathematics, Johns Hopkins University, Baltimore MD 21218}

\begin{abstract}
Let $\eta(z)$ be the Dedekind eta function. Newman \cite{MR91352,MR107629} studied the modularity of eta-quotients, giving necessary and sufficient conditions for a function of the form $\prod_{0 < m \mid N} \eta(m z)^{r_m}$ to be a (weakly) holomorphic modular form of level $N$. We explain a proof of Newman's theorem. The key observation is that although $\Gamma_1(N)$ is not generated by  $\sm{1}{1}{0}{1}$ and $\sm{1}{0}{N}{1}$, it is generated by those two matrices together with any congruence subgroup. Modularity with respect to some congruence subgroup is established using a simple identity involving the multiplier system of $\eta(z)$, whose proof is elementary in the sense that it avoids the use of Dedekind sums.
\end{abstract}

\maketitle

\section{Introduction}

The Dedekind eta function, defined on the complex upper half-plane by the formula
\[ \eta(z) = q^{\frac{1}{24}} \prod_{n \ge 1} (1 - q^n) \]
with $q = e^{2\pi i z}$ and $q^{\frac{1}{24}}$ understood to mean $e^{2 \pi i z / 24}$, is the source of numerous explicit examples of classical modular forms. To name just a few:\ $\Delta(z) = \eta(z)^{24}$ is the famed cusp form of weight $12$ and level $1$ whose Fourier coefficients are the Ramanujan tau function, $\theta_3(z) = \frac{\eta(2z)^5}{\eta(z)^2 \eta(4z)^2}$ is the theta series $\sum_{n \in \Z} q^{n^2}$, and $\eta(11z)^2 \eta(z)^2$ is the modular form associated to the elliptic curve $y^2-y = x^3-x^2$ of conductor $11$ (which is the minimal conductor of an elliptic curve over $\Q$).

Define an \emph{eta-quotient} to be a function of the form
\[ f(z) = \prod_{0 < m \mid N} \eta(m z)^{r_m} \]
for some positive integer $N$ and integers $r_m$. We set $k = \frac 12 \sum_{m} r_m$, the \emph{weight} of~$f$. Thus each of the three functions mentioned above is an eta-quotient, of weight $12$,~$\frac 12$, and~$2$ respectively. Thanks to the relation \[ q^{\frac{1}{24}}/\eta(z) = \prod_{n\ge 1} (1-q^n)^{-1} = \sum_{n\ge 0} p(n) q^n,\] where $p(n)$ is the partition function, eta-quotients are the foundation of many works in partition theory and representation theory;\ see \cite{MR2020489} for  numerous examples.

In the late 1950s, Morris Newman established the following extremely useful theorem, which governs the modularity properties of eta-quotients.

\begin{thm}[\cite{MR91352,MR107629}]\label{thm:newman}
Let $f$ be as above and suppose $k \in \Z$. Then $f$ is a weakly holomorphic modular form on $\Gamma_1(N)$ if and only if
\[ \sum_{0 < m \mid N} m r_m \equiv 0 \pmod{24} \quad \mathrm{and} \quad \sum_{0 < m \mid N} m r_{N/m} \equiv 0 \pmod{24} .\]
Furthermore the Nebentypus character of $f$ is the character $\chi(d) = \leg{(-1)^k s}{d}$, where $s = \prod_{m} m^{r_{m}}$ and the representative $d$ of a class in $(\Z/N\Z)^{\times}$ is chosen to be odd.
\end{thm}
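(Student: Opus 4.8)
The plan is to reduce the whole statement to the computation of a single multiplier homomorphism. For $\gamma=\sm{a}{b}{c}{d}\in\Gamma_0(N)$ and $m\mid N$ one has $m\gamma z=\gamma_m(mz)$ with $\gamma_m=\sm{a}{mb}{c/m}{d}\in\SL_2(\Z)$, so the transformation law $\eta(\gamma_m w)=\epsilon(\gamma_m)(\tfrac{c}{m}w+d)^{1/2}\eta(w)$ gives
\[ f(\gamma z)=\Bigl(\prod_{0<m\mid N}\epsilon(\gamma_m)^{r_m}\Bigr)(cz+d)^{k}f(z)=:\chi(\gamma)\,(cz+d)^{k}f(z). \]
Since $k\in\Z$ the weight-$k$ slash operator is an honest right action, so $\chi$ is a homomorphism, and since $\eta^{24}=\Delta$ it takes values in $\mu_{24}$. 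As $f$ is holomorphic and nowhere vanishing on $\HH$ and has finite principal part at every cusp, for a subgroup $\Gamma'\le\Gamma_0(N)$ the assertion ``$f$ is a weakly holomorphic modular form on $\Gamma'$'' is equivalent to ``$\chi|_{\Gamma'}=1$''. Thus the theorem becomes: $\chi|_{\Gamma_1(N)}=1$ iff the two congruences hold, and in that case $\chi$ --- which then factors through $\Gamma_0(N)/\Gamma_1(N)\cong(\Z/N\Z)^\times$ --- equals $d\mapsto\leg{(-1)^ks}{d}$.

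\textbf{The two unipotent generators.} The $q$-expansion gives $f(z+1)=e^{\pi i(\sum_m mr_m)/12}f(z)$, so $\chi(\sm{1}{1}{0}{1})=1$ iff $\sum_m mr_m\equiv0\pmod{24}$. For $\sm{1}{0}{N}{1}$ I would conjugate by the Fricke involution $z\mapsto-1/(Nz)$: using $\eta(-1/w)=\sqrt{-iw}\,\eta(w)$ together with the substitutions $w=Nz/m$ and $m\mapsto N/m$, one finds that $f(-1/(Nz))$ equals an explicit automorphy factor times the flipped eta-quotient $\widetilde f(z)=\prod_m\eta(mz)^{r_{N/m}}$, which has the same weight $k$; chasing the conjugation, $\chi(\sm{1}{0}{N}{1})=1$ iff $\widetilde f(z+1)=\widetilde f(z)$ iff $\sum_m mr_{N/m}\equiv0\pmod{24}$. (Alternatively one computes the lower-unipotent $\eta$-multiplier directly from $\sm{1}{0}{c'}{1}=\sm{0}{-1}{1}{0}^{-1}\sm{1}{-c'}{0}{1}\sm{0}{-1}{1}{0}$.) The only non-formal point here is tracking the branch of the square root in the automorphy factor --- a finite check.

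\textbf{Triviality on a congruence subgroup, and assembly.} For a level $M$ divisible by $24$ and by $N$ I would show $\chi|_{\Gamma(M)}=1$. The input is an elementary, Dedekind-sum-free identity for the $\eta$-multiplier; the natural source is the theta expansion $\eta(z)=\sum_{n\ge1}\leg{12}{n}q^{n^2/24}$, whose transformation under congruence subgroups is a Poisson-summation computation and which shows each $\eta(mz)$, $m\mid N$, transforms under $\Gamma(M)$ by a quadratic residue symbol, hence trivially, so $\chi|_{\Gamma(M)}=1$. Now invoke the group-theoretic fact from the abstract, $\Gamma_1(N)=\langle\sm{1}{1}{0}{1},\ \sm{1}{0}{N}{1},\ \Gamma(M)\rangle$: if both congruences hold then $\chi$ kills all three generating sets, so $\chi|_{\Gamma_1(N)}=1$ and $f$ is a weakly holomorphic modular form on $\Gamma_1(N)$; conversely $\chi|_{\Gamma_1(N)}=1$ forces the two congruences by evaluating on the two unipotent matrices. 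This gives the ``if and only if''.

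\textbf{The Nebentypus, and the main obstacle.} Granting the congruences, $\chi$ descends to a character $d\mapsto\chi(d)$ of $(\Z/N\Z)^\times$. To evaluate it I would feed the explicit value of $\epsilon$ on the matrices $\gamma_m=\sm{a}{mb}{c/m}{d}$ (again read off the theta transformation) into $\prod_m\epsilon(\gamma_m)^{r_m}$: the residue-symbol factors combine, via quadratic reciprocity and $\sum_m r_m=2k$, into $\leg{s}{d}$ with $s=\prod_m m^{r_m}$ (the power of $c$ being a square); the $2k$ square-root automorphy factors contribute $(-1)^k$, and this together with the leftover $24$th roots of unity simplifies --- once the two congruences are imposed --- to the real number $\leg{(-1)^k}{d}$, leaving $\chi(d)=\leg{(-1)^ks}{d}$. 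The same congruences also make this formula well defined on $(\Z/N\Z)^\times$ (they pin down the behaviour at the prime $2$) and multiplicative. I expect the main obstacle to be precisely these two places where the \emph{exact} $\eta$-multiplier is required --- on $\Gamma(M)$ and on the $\gamma_m$ --- namely reading off every root of unity from the theta transformation without Dedekind sums, and then reconciling the result with the congruence conditions so that the Nebentypus comes out as the advertised Jacobi symbol. The branch-of-square-root bookkeeping in the second step above is fussy but routine, and the group-generation statement is taken as given.
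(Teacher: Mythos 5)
Your skeleton coincides with the paper's: show that the two congruences are equivalent to invariance under $\sm{1}{1}{0}{1}$ and $\sm{1}{0}{N}{1}$ (via conjugation by the Fricke matrix, exactly as in the paper), show that $f$ is invariant under \emph{some} congruence subgroup, and combine these with the fact that those two unipotents together with any congruence subgroup generate $\Gamma_1(N)$. That reduction, and your packaging of everything into the character $\chi(\gamma)=\prod_m\varepsilon(\gamma_m)^{r_m}$ with values in $\mu_{24}$, are correct. But the two load-bearing inputs are left unproved. The generation statement you simply take from the abstract; the paper does prove it (by showing that $\sm{1}{0}{N}{1}$ and $\Gamma_1(pN)$ generate $\Gamma_1(N)$ and stripping one prime at a time from the level), and it is not a formality. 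More seriously, the entire analytic content of the theorem --- the exact value of the $\eta$-multiplier on a deep congruence subgroup and on the conjugates $\gamma_m=\sm{a}{mb}{c/m}{d}$ --- is deferred to a one-sentence appeal to $\eta(z)=\sum_{n\ge1}\leg{12}{n}q^{n^2/24}$ and Poisson summation. That route is viable and genuinely different from the paper's (the paper instead proves the closed formula $\varepsilon(\gamma)=\leg{c}{d}e^{\pi i E(a,b,c,d)/12}$ on all of $\Gamma_0(4)$ by verifying the cocycle relation only against the generators $T$, $\sm{1}{0}{4}{1}$, $-I$ --- which works because $\Gamma_0(4)$, unlike $\Gamma_1(N)$ in general, \emph{is} generated by those matrices --- and then specializes to $\Gamma_0(24)\cap\Gamma^0(24)$, where the root of unity collapses to $e^{\pi i(d-1)/4}$). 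You flag this computation as the main obstacle, and it is: until every root of unity in the transformation law is pinned down, neither the sufficiency of the congruences nor the Nebentypus formula is established, so the proposal as written is a plan rather than a proof.

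One further point you gloss over in the Nebentypus step. Any explicit multiplier formula of the kind you describe will apply directly only to matrices lying in a group such as $\Gamma_0(24N)\cap\Gamma^0(24)$, which forces the entry $d$ to be coprime to $6$, whereas the theorem asserts $\chi(\overline{d})=\leg{(-1)^ks}{d}$ for every \emph{odd} lift $d$ of a class in $(\Z/N\Z)^{\times}$. When $3\nmid N$ one must still show that odd lifts divisible by $3$ give the same value; the paper does this by reducing to the squarefree part of $s$, noting that $\leg{(-1)^ks}{d}$ then depends only on $d$ modulo $4N$, and replacing $d$ by $d+4N$. Your remark that the congruences ``pin down the behaviour at the prime $2$'' gestures at a different issue and does not address this one.
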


Here $\leg{m}{d}$ denotes the extended Jacobi symbol, which for positive odd $d$ is extended multiplicatively (in $d$) from the Legendre symbol, and is defined to be $\mathrm{sgn}(m) \leg{m}{|d|}$ when $d < 0$. We review properties of this extended Jacobi symbol in Appendix~\ref{app:jacobi}.

\begin{remark}
 \emph{Weakly holomorphic} means that we do not impose any conditions at the cusps. It is straightforward to check that the $q$-expansion of $\eta(m z)$ at a cusp $\frac{a}{c}$ has leading term $q^{\frac{1}{24} \frac{(c,m)^2}{m}}$, and so for $f$ to be holomorphic at the cusps it is necessary and sufficient that 
\[ \sum_{0 < m \mid N } \frac{(c,m)^2}{m} r_m \ge 0 \]
for all $c \mid N$, with strict inequality in order that $f$ be a cusp form. This observation is often attributed to \cite{MR417060}.
\end{remark}

Theorem~\ref{thm:newman} is perhaps slightly surprising at first glance. As we will see, it is not difficult to check that $\sum_{0 < m \mid N} m r_m \equiv 0 \pmod{24}$ if and only if $f(z)$ has the correct modular transformation property under the matrix $T =\sm{1}{1}{0}{1}$, while  $\sum_{0 < m \mid N} m r_{N/m} \equiv 0 \pmod{24}$  if and only if $f(z)$ has the correct modular transformation property under the matrix $\bar{T}^N = \sm{1}{0}{N}{1}$, with $\bar{T} = \sm{1}{0}{1}{1}$. But in general $T$ and $\bar{T}^N$ do not generate $\Gamma_1(N)$;\ so why should these conditions be sufficient?

Newman proves his theorem via delicate manipulations with the \emph{multiplier system} for~$\eta$, the function $\varepsilon : \SL_2(\Z) \to \mu_{24}(\C)$ defined by the formula $\eta|_{\gamma} = \varepsilon(\gamma) \eta$.
(We will make this definition more precise in the next section.) The general formula for the multiplier system of $\eta$ is somewhat elaborate,  requiring the use of Dedekind sums. If $f$ is an eta-quotient satisfying the conditions of Theorem~\ref{thm:newman}, Newman uses this formula to evaluate $f|_\gamma$ on a general element $\gamma \in \Gamma_1(N)$ whose upper-left entry is relatively prime to $6$, and then applies various Dedekind sum identities to show that $f|_{\gamma} = f$ for such matrices.

While teaching a class on modular forms for talented high school students at \href{https://www.mathcamp.org}{Canada/USA Mathcamp} in July 2025, we attempted to understand Newman's result more conceptually, leading to the proof that we explain below. The key observation is that although $\sm{1}{1}{0}{1}$ and $\sm{1}{0}{N}{1}$ do not generate $\Gamma_1(N)$ in general, they do generate it together with \emph{any} congruence subgroup $\Gamma \subset \SL_2(\Z)$. Thus it suffices to prove that the eta-quotient is modular for \emph{some} congruence subgroup. This is a general fact about eta-quotients of integer weight. To prove this, the only property of $\varepsilon$ that we need is the following. 

\begin{thm}\label{thm:eta-property} If $\gamma = \sm{a}{b}{c}{d} \in \Gamma_0(24) \cap \Gamma^0(24)$ and $c \neq 0$ then
  \[ \eta|_{\gamma} = \leg{c}{d} e^{\pi i (d-1)/4 } \eta.\]
  Here $\Gamma^0(N) = \Gamma_0(N)^t$ denotes the subgroup of matrices that are lower triangular modulo $N$.
\end{thm}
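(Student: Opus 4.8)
The plan is to express $\eta$ as a theta series attached to a Dirichlet character and then transport the classical (Dedekind-sum-free) transformation law of such a series across a scaling conjugation. Euler's pentagonal number theorem gives $\prod_{n\ge 1}(1-q^{n}) = \sum_{k\in\Z}(-1)^{k}q^{k(3k-1)/2}$, and since $1+12k(3k-1) = (6k-1)^{2}$ this yields $\eta(24z) = \sum_{k\in\Z}(-1)^{k}q^{(6k-1)^{2}}$. Matching the sign $(-1)^{k}$ against $\leg{12}{6k-1}$ (a short congruence check) rewrites this as $\eta(24z) = \tfrac12 F(z)$, where
\[ F(z) = \sum_{n\in\Z}\leg{12}{n}q^{n^{2}} \]
is the theta series of the primitive real even character $\leg{12}{\cdot}$ modulo $12$; equivalently $\eta(z)=\tfrac12 F(z/24)$, i.e.\ $\eta = \tfrac12\, F\circ M^{-1}$ for $M = \sm{24}{0}{0}{1}$.

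First I would record the group-theoretic fact that conjugation by $M$ interchanges the two relevant groups: a direct computation shows $M^{-1}\bigl(\Gamma_{0}(24)\cap\Gamma^{0}(24)\bigr)M = \Gamma_{0}(576)$, the point being that for $\gamma = \sm{a}{b}{c}{d}$ with $24\mid b$ and $24\mid c$ the matrix $\gamma' := M^{-1}\gamma M = \sm{a}{b/24}{24c}{d}$ has integer entries and lower-left entry divisible by $576$. Using $M^{-1}\gamma = \gamma' M^{-1}$ together with $\eta = \tfrac12 F\circ M^{-1}$, one gets $\eta(\gamma z) = \tfrac12 F\bigl(\gamma'(z/24)\bigr)$; since $24c\cdot(z/24)+d = cz+d$, the automorphy factors line up, and $\eta|_{\gamma} = \mu(\gamma')\,\eta$, where $\mu(\gamma')$ is the constant by which $F$ transforms under $\gamma'\in\Gamma_{0}(576)$.

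It then remains to identify $\mu(\gamma')$. Here I invoke the classical transformation law for theta series with character, proved by Poisson summation: for $\gamma'=\sm{a'}{b'}{c'}{d'}\in\Gamma_{0}(576)$ one has $F|_{\gamma'} = \leg{12}{d'}\leg{c'}{d'}\epsilon_{d'}^{-1}\,F$, where $\epsilon_{d'}$ is $1$ or $i$ according as $d'\equiv 1$ or $3\pmod 4$; the only arithmetic input beyond the $\SL_{2}$–functional equation of $\sum_{n}q^{n^{2}}$ is the Gauss-sum evaluation $\sum_{k\bmod 12}\leg{12}{k}e^{2\pi i k/12} = \sqrt{12}$, which is elementary. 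Substituting $c'=24c$, $d'=d$ and using $\gcd(d,24)=1$ (a consequence of $ad\equiv 1\pmod{24}$), I would simplify: from $\leg{12}{d}=\leg{4}{d}\leg{3}{d}$ and $\leg{24c}{d}=\leg{4}{d}\leg{2}{d}\leg{3}{d}\leg{c}{d}$ together with $\leg{4}{d}=1=\leg{3}{d}^{2}$ one obtains $\mu(\gamma') = \leg{2}{d}\,\epsilon_{d}^{-1}\,\leg{c}{d}$; running through $d\bmod 8$ then verifies the identity $\leg{2}{d}\,\epsilon_{d}^{-1} = e^{\pi i(d-1)/4}$, giving $\mu(\gamma') = \leg{c}{d}\,e^{\pi i(d-1)/4}$ as claimed. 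The reduction to $c>0$ and the treatment of negative $d$ are handled by combining the transformation of $\eta$ under $-I$ with the properties of the extended Jacobi symbol from Appendix~\ref{app:jacobi}.

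The main obstacle is not any single step but the careful alignment of conventions: the branch of $(cz+d)^{1/2}$ and the normalizations implicit in $\eta=\tfrac12 F\circ M^{-1}$ must be tracked precisely so that the factor $\epsilon_{d}^{-1}$ and the sign of $\leg{c}{d}$ emerge exactly, and the whole argument must respect the sign conventions of the extended Jacobi symbol for negative $c$ and $d$. If one wants the proof to be fully self-contained rather than quoting the character-twisted theta transformation, the one place where genuine work is hidden is the evaluation (with its sign) of the quadratic Gauss sum of $\leg{12}{\cdot}$; everything else is Poisson summation and elementary manipulation.
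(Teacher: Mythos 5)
Your argument is correct in outline but follows a genuinely different route from the paper. The paper deduces Theorem~\ref{thm:eta-property} as an immediate corollary of Theorem~\ref{thm:gamma0-4-thm}, an explicit closed formula for $\varepsilon(\gamma)$ on all of $\Gamma_0(4)$, which it proves by pure generators-and-relations: check the formula on the generators $T$, $\bar{T}^4$, $-I$ of $\Gamma_0(4)$ and verify that the proposed formula satisfies the cocycle identity \eqref{eq:multip}, everything reducing to congruences mod $24$ and Jacobi-symbol identities. You instead restrict to the small group $\Gamma_0(24)\cap\Gamma^0(24)$ from the start, identify $\eta(24z)=\tfrac12\sum_n\leg{12}{n}q^{n^2}$ via the pentagonal number theorem, conjugate by $\sm{24}{0}{0}{1}$ to land in $\Gamma_0(576)$, and quote the Shimura-type transformation law for character theta series; your closing simplification $\leg{2}{d}\,\epsilon_d^{-1}=e^{\pi i(d-1)/4}$ and the cleanup of $\leg{12}{d}\leg{24c}{d}$ are correct, and the conjugation step is sound. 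The trade-offs are worth noting. Your route is more conceptual---it explains the shape $\leg{c}{d}\times(\text{root of unity})$ as a twisted theta multiplier---but it outsources the real work to the transformation law of $\theta_\chi$ on $\Gamma_0(4N^2)$, whose standard proof requires the full $\Gamma_0(4)$ multiplier of $\sum q^{n^2}$ (generalized quadratic Gauss sums or, again, a generators-and-relations argument on $\Gamma_0(4)$); so it is not self-evidently more elementary than the paper's computation, and it does not yield the stronger statement on $\Gamma_0(4)$ (or $\Gamma_0(2)$) that the paper obtains. Finally, the branch-of-square-root and negative-$c,d$ bookkeeping that you defer to a remark is precisely where the paper expends real effort (Definition~\ref{defn:gamma-sign}, Lemma~\ref{lem:sign-cocycle}, and Appendix~\ref{app:jacobi}); to make your proof complete you would need to carry that out with the same care, since the cited theta transformation laws are usually stated only for $c>0$ or with a different normalization of the Kronecker symbol.
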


From this formula we deduce that $\eta(z)^2$ and $\eta(mz)/\eta(z)$ are both modular for some congruence subgroup. Since any eta-quotient of integer weight can be written as a product of these, the desired modularity follows.

Finally, it remains to prove Theorem~\ref{thm:eta-property}. We do this by proving a somewhat more general formula, for $\gamma \in \Gamma_0(4)$, whose statement and proof can still be formulated without reference to Dedekind sums. Thus our proof avoids Dedekind sums entirely, and so is in some sense more elementary than the original.

\subsection*{Acknowledgments} We thank Canada/USA Mathcamp for hosting the visit during which this project began, and the Mathcamp students for their enthusiasm. The author is supported by a gift from the Simons Foundation under the Travel Support for Mathematicians program.

\section{The multiplier system for the eta function}\label{sec:multiplier}

The eta function has the classical transformation properties
\[ \eta(z+1) = e^{\pi i/12 }\eta(z), \quad \eta(-1/z) = e^{-\pi i/4} \sqrt{z} \cdot \eta(z), \]
where the branch of $\sqrt{z}$ on the upper half-plane is chosen to take values in the first quadrant. The first property is immediate;\ while the second property can be proved, for example, by observing that the logarithmic derivative $\eta'/\eta$ is a scalar multiple of the Eisenstein series $G_2$ of weight $2$, and applying the corresponding transformation property of~$G_2$.

For each $\gamma = \left(\begin{smallmatrix}
  a & b \\ c & d
\end{smallmatrix}\right) \in \GL^+_2(\R)$ we define $j(\gamma,z) = cz+d$, and we further define $j(\gamma,z)^{\frac12}$ for $z$ in the upper half-plane to be the branch with argument in the range $(-\frac{\pi}{2},\frac{\pi}{2}]$. The standard cocycle formula
\[ j(\gamma'\gamma,z) = j(\gamma,z) j(\gamma',\gamma z) \] implies that
\[ j(\gamma'\gamma,z)^{\frac12} = j(\gamma,z)^{\frac12}  j(\gamma',\gamma z)^{\frac12} \sigma(\gamma',\gamma) \] 
for some $\sigma(\gamma',\gamma) \in \{\pm 1\}$.\footnote{It seems to us that taking the argument of $j(\gamma,z)^{\frac12}$ instead to be in the range $[-\frac{\pi}{2},\frac{\pi}{2})$ could be a somewhat better choice:\ for example our slightly odd definition of the sign of an element of $\SL_2(\Z)$ (Definition~\ref{defn:gamma-sign}) could be rectified. However, changing this convention would also require altering the prevailing convention for the Jacobi symbol $\leg{0}{-1}$, in order to keep the statement of Theorem~\ref{thm:gamma0-4-thm} uniform. For this reason, and to align with existing literature, we hew to the standard choice.}

If $f(z)$ is an eta-quotient of weight $k \in \frac12 \Z$ and $\gamma \in \GL^+_2(\R)$ we write
\[ f|_\gamma  = f(\gamma z) j(\gamma,z)^{-k}, \]
where $\gamma z = \frac{az+b}{cz+d}$ as usual when $\gamma = \sm{a}{b}{c}{d}$. This is the standard ``slash operator'', except that we suppress the weight $k$ from the notation. Then $f|_{\gamma' \gamma} = (f|_{\gamma'})|_{\gamma}$ if $k$ is an integer, and we say that $f$ is modular of level $\Gamma$ if $f|_\gamma = f$ for all $\gamma\in \Gamma$. If instead $k$ is a half-integer then
\begin{equation}
  \label{eq:slash}
  f|_{\gamma'\gamma} = (f|_{\gamma'})|_{\gamma} \cdot \sigma(\gamma',\gamma).
\end{equation}

Returning to the example of the eta function, the classical transformation properties can now be written
\begin{equation}\label{eq:explicit-eta}
  \eta|_T = e^{\pi i/12} \eta, \quad \eta|_S = e^{-\pi i/4} \eta
  \end{equation}
where $T = \sm{1}{1}{0}{1}$ as before, and $S = \sm{0}{-1}{1}{0}$. 

\begin{defn}
  The \emph{multiplier system} of the Dedekind eta function is the function $\varepsilon : \SL_2(\Z) \to \mu_{24}(\C)$ defined by
  \[ \eta|_{\gamma} = \varepsilon(\gamma) \eta. \]
  Here $\mu_{24}(\C)$ is the group of $24$th roots of unity in $\C$.
\end{defn}

To see that $\varepsilon$ is valued in $\mu_{24}$, note that since $\eta$ has weight $\frac{1}{2}$, formula \eqref{eq:slash} implies that \[\varepsilon(\gamma' \gamma) = \varepsilon(\gamma')\varepsilon(\gamma) \sigma(\gamma',\gamma) \in \{ \pm  \varepsilon(\gamma')\varepsilon(\gamma) \}.\]
The formulas \eqref{eq:explicit-eta} give $\varepsilon(S), \varepsilon(T) \in \mu_{24}$. Since  $\SL_2(\Z)$ is generated by $S$ and $T$ the claim follows.

\section{Proof of Newman's theorem}

Granting Theorem~\ref{thm:eta-property}, we now prove Newman's theorem.  Write $ w_N = \sm{0}{-1}{N}{0}.$

\begin{prop}\label{prop:basic-eta-quotient-slash}
Let $f(z) = \prod_{0 < m \mid N } \eta(mz)^{r_m}$ be an eta-quotient of weight $k$. Define $f^* = \prod_{0 < m \mid N} \eta(mz)^{r_{N/m}}$. Then we have the formulas

\[ f|_T = e^{\pi i b/12} f\]
where $b = \sum_{0 < m \mid N} m r_m$,
and
  \[ f|_{w_N} = \frac{e^{-\pi i k/2}}{\sqrt{s}} f^* \]
where $s = \prod_{0 < m \mid N} m^{r_m}$.
\end{prop}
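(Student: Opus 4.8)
The plan is to compute $f|_T$ and $f|_{w_N}$ directly from the definition $f|_\gamma(z) = f(\gamma z)\,j(\gamma,z)^{-k}$ together with the two classical transformation laws $\eta(z+1) = e^{\pi i/12}\eta(z)$ and $\eta(-1/z) = e^{-\pi i/4}\sqrt z\,\eta(z)$ recorded in Section~\ref{sec:multiplier}. Neither Theorem~\ref{thm:eta-property} nor any Dedekind-sum machinery is needed here; this proposition is a bookkeeping exercise with branches of square roots.

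For $f|_T$: since $j(T,z) = 1$ we have $f|_T(z) = f(z+1)$, and applying $\eta(z+1) = e^{\pi i/12}\eta(z)$ factorwise gives $\eta(m(z+1)) = \eta(mz+m) = e^{\pi i m/12}\eta(mz)$, so
\[ f(z+1) = \prod_{0<m\mid N} e^{\pi i m r_m/12}\,\eta(mz)^{r_m} = e^{\pi i b/12}\, f(z). \]
(The exponents $r_m$ may be negative, but $\eta$ is nonvanishing on the upper half-plane, so each power $\eta(mz)^{r_m}$ and their product are unambiguous.)

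For $f|_{w_N}$: here $w_N z = -1/(Nz)$ and $j(w_N,z) = Nz$. For each divisor $m$ of $N$ write $m\cdot w_N z = -1/((N/m)z)$ and apply the second transformation law with $w = (N/m)z$, obtaining $\eta(m\, w_N z) = e^{-\pi i/4}\sqrt{(N/m)z}\,\eta((N/m)z)$. Raising to the power $r_m$ and multiplying over $m$, I would collect three factors: the product of the $e^{-\pi i/4}$'s, which equals $e^{-\pi i\sum_m r_m/4} = e^{-\pi i k/2}$ since $\sum_m r_m = 2k$; the product $\prod_m\bigl(\sqrt{(N/m)z}\bigr)^{r_m}$ of square-root factors; and $\prod_m\eta((N/m)z)^{r_m}$, which is exactly $f^*(z)$ after the substitution $m\mapsto N/m$ that permutes the divisors of $N$. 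It then remains only to combine the square-root product with $j(w_N,z)^{-k}$.

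The one step that needs care — and the main (mild) obstacle — is this last branch bookkeeping, especially when $k$ is a half-integer, so that $j(w_N,z)^{-k}$ means $\bigl(j(w_N,z)^{1/2}\bigr)^{-2k}$. Since $N/m$ is a positive real, the first-quadrant square root factors as $\sqrt{(N/m)z} = \sqrt{N/m}\,\sqrt z$ with $\sqrt{N/m}>0$, and similarly $j(w_N,z)^{1/2} = (Nz)^{1/2} = \sqrt N\,\sqrt z$ (the argument of $Nz$ lies in $(0,\pi)$, so its principal square root sits in the first quadrant, matching the $(-\tfrac\pi2,\tfrac\pi2]$ convention). Hence $\prod_m\bigl(\sqrt{(N/m)z}\bigr)^{r_m} = \bigl(\prod_m (N/m)^{r_m/2}\bigr)(\sqrt z)^{2k} = N^{k}\,s^{-1/2}\,(\sqrt z)^{2k}$, using $\prod_m (N/m)^{r_m} = N^{2k}/s$ and taking positive real roots (legitimate since $s>0$), while $j(w_N,z)^{-k} = N^{-k}(\sqrt z)^{-2k}$. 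Multiplying everything together, the powers of $N$ and of $\sqrt z$ cancel, leaving $f|_{w_N} = e^{-\pi i k/2}\,s^{-1/2}\,f^*$, as desired. Beyond this branch accounting and the reindexing $m\leftrightarrow N/m$, the argument is immediate from the classical formulas.
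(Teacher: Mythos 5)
Your proof is correct and follows essentially the same route as the paper: apply the two classical transformation laws of $\eta$ factor by factor, collect the constants, and reindex $m \mapsto N/m$ to recognize $f^*$. The only cosmetic difference is that the paper computes $\eta_m|_{w_N} = e^{-\pi i/4} m^{-1/2}\,\eta_{N/m}$ for each factor and then invokes multiplicativity, whereas you carry out the square-root and branch bookkeeping globally across the whole product; both are fine.
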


\begin{proof}
  Write $\eta_m(z) = \eta(mz)$. For the first formula we compute
  \[ \eta_m|_T = \eta_m(z+1) = \eta(mz + m) = e^{\pi i m/12} \eta(mz) = e^{\pi i m/12} \eta_m(z),\]
  and therefore $f|_T = \exp(\pi i (\sum_m m r_m) /12 ) f$.

  For the second part, we compute that
  \begin{align*}
   \eta_m|_{w_N}(z) & = (Nz)^{-\frac12}\eta_m\left(-\frac{1}{Nz}\right) \\
     & = (Nz)^{-\frac12}\eta\left(-\frac{m}{Nz}\right) \\
     & = (Nz)^{-\frac12} e^{-\pi i/4} \sqrt{\frac{Nz}{m}} \cdot  \eta\left(\frac{N}{m} z\right)\\
     & = \frac{e^{-\pi i/4}}{\sqrt{m}} \eta_{N/m}(z).
  \end{align*}
  This gives the claim for $\eta_m$, and the claim for $f$ follows by multiplicativity.
  \end{proof}

\begin{cor}\label{cor:eta-quotient-necessary}
  Let $f(z) = \prod_{0 < m \mid N } \eta(mz)^{r_m}$ be an eta-quotient. Then we have
\begin{equation}\label{eq:T-cond}
 f|_{\sm{1}{1}{0}{1}} = f 
 \quad  \iff \quad 
 \sum_{0 < m \mid N} m r_m \equiv 0 \pmod{24}    
\end{equation}
and
\begin{equation}\label{eq:wN-cond}
  f|_{\sm{1}{0}{N}{1}}= f \quad \iff \quad \sum_{0 < m \mid N} m r_{N/m} \equiv 0 \pmod{24}. 
\end{equation}
If $2,3 \nmid N$ then the conditions \eqref{eq:T-cond} and \eqref{eq:wN-cond} are equivalent because $m^2 \equiv 1 \pmod{24}$ for all $m$ prime to $24$.
\end{cor}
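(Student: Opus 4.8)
The plan is to read off all three assertions from Proposition~\ref{prop:basic-eta-quotient-slash} together with one short direct computation. The overarching observation is that $\eta$, hence $f$, never vanishes in the upper half-plane, so whenever $f|_\gamma$ is a scalar multiple of $f$ the identity $f|_\gamma = f$ holds if and only if that scalar is~$1$. In particular the equivalence \eqref{eq:T-cond} is immediate from Proposition~\ref{prop:basic-eta-quotient-slash}: there $f|_T = e^{\pi i b/12}f$ with $b = \sum_{0<m\mid N} m r_m \in \Z$, and $e^{\pi i b/12} = 1$ if and only if $24 \mid b$.

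For the equivalence \eqref{eq:wN-cond} I would compute $f|_{\bar{T}^N}$ directly, imitating the proof of the second formula of Proposition~\ref{prop:basic-eta-quotient-slash}. Writing $\eta_m(z) = \eta(mz)$, the key point is that, because $m \mid N$,
\[ \eta_m\!\left(\bar{T}^N z\right) = \eta\!\left(\frac{mz}{Nz+1}\right) = \eta\!\left(\bar{T}^{N/m}(mz)\right). \]
Factoring the Möbius map $\bar{T}^{N/m}$ as $z \mapsto -1/z$, then $z \mapsto z - N/m$, then $z \mapsto -1/z$, and substituting the classical identities $\eta(z+1) = e^{\pi i/12}\eta(z)$ and $\eta(-1/z) = e^{-\pi i/4}\sqrt{z}\,\eta(z)$, one computes $\eta_m|_{\bar{T}^N} = e^{-\pi i (N/m)/12}\eta_m$. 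Multiplying over the divisors $m$ with exponents $r_m$ then gives
\[ f|_{\bar{T}^N} = e^{-\pi i B/12}f, \qquad B := \sum_{0<m\mid N} \tfrac Nm\, r_m = \sum_{0<m\mid N} m\, r_{N/m}, \]
so that $f|_{\bar{T}^N} = f$ exactly when $24 \mid B$, which is \eqref{eq:wN-cond}. (One could instead route this through the matrix identity $\bar{T}^N = w_N T^{-1} w_N^{-1}$ together with $f^*|_T = e^{\pi i B/12} f^*$, but the scalar $w_N^{-1} = \tfrac1N \sm{0}{1}{-N}{0}$ adds bookkeeping that makes the direct computation preferable.)

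Finally, suppose $2,3 \nmid N$. Then each divisor $m \mid N$, and each $N/m$, is coprime to~$24$, so $m^2 \equiv 1 \pmod{24}$, i.e.\ $m^{-1} \equiv m \pmod{24}$. From $m \cdot (N/m) = N$ we get $N/m \equiv Nm \pmod{24}$, hence
\[ B = \sum_{0<m\mid N}\tfrac Nm\, r_m \equiv N\sum_{0<m\mid N} m\, r_m = Nb \pmod{24}. \]
Since $\gcd(N,24) = 1$, multiplication by $N$ is invertible modulo~$24$, so $24 \mid b$ if and only if $24 \mid B$; that is, \eqref{eq:T-cond} and \eqref{eq:wN-cond} are equivalent in this case.

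The one genuinely delicate step is the middle computation of $\eta_m|_{\bar{T}^N}$: the two applications of $\eta(-1/z)$ produce a factor $e^{-\pi i/2}$ together with a product of square roots $w^{1/2}(mz)^{1/2}$ with $w = -(Nz+1)/(mz)$ in the upper half-plane, and after dividing by $j(\bar{T}^N,z)^{1/2} = (Nz+1)^{1/2}$ one must verify the residual phase is exactly $i$, so that the net scalar is $e^{-\pi i/2}\cdot i\cdot e^{-\pi i(N/m)/12} = e^{-\pi i(N/m)/12}$. This requires care because the $\eta$-transformation uses the first-quadrant branch of $\sqrt{z}$ while $j(\gamma,z)^{1/2}$ uses the $(-\pi/2,\pi/2]$ branch; but it is precisely the branch analysis already carried out for $w_N$ in the proof of Proposition~\ref{prop:basic-eta-quotient-slash}, so it introduces no new obstacle.
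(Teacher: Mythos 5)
Your proof is correct, but your treatment of \eqref{eq:wN-cond} takes a genuinely different route from the paper's. The paper uses the conjugation identity $\sm{1}{0}{-N}{1} = w_N \sm{1}{1}{0}{1} w_N^{-1}$ together with the formula $f|_{w_N} = e^{-\pi i k/2} s^{-1/2} f^*$ from Proposition~\ref{prop:basic-eta-quotient-slash}, so that $f|_{\sm{1}{0}{N}{1}} = f$ if and only if $f^*|_{\sm{1}{1}{0}{1}} = f^*$, and \eqref{eq:wN-cond} is just \eqref{eq:T-cond} applied to $f^*$; no new branch analysis is needed because it is all packaged into the already-proved $w_N$ formula. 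You instead compute $\eta_m|_{\bar{T}^N} = e^{-\pi i (N/m)/12}\,\eta_m$ directly by factoring $\bar{T}^{N/m}$ through two inversions; your identification of the residual phase as $i$ checks out (with $v = -(Nz+1)/(mz)$ one has $v\cdot mz = -(Nz+1)$, the product $\sqrt{v}\,\sqrt{mz}$ of two first-quadrant numbers lies in the upper half-plane, and $(Nz+1)^{1/2}$ lies in the open first quadrant, which forces $\sqrt{v}\,\sqrt{mz} = i\,(Nz+1)^{1/2}$), and the resulting eigenvalue $e^{-\pi i B/12}$ with $B = \sum_m m\,r_{N/m}$ gives \eqref{eq:wN-cond} immediately. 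What your route buys is this explicit eigenvalue of $\bar{T}^N$ on $f$ and the avoidance of any half-integral-weight sign bookkeeping in conjugating by $w_N$, since each $\eta_m|_{\bar{T}^N}$ is computed as a bare function identity; what it costs is repeating, for $\bar{T}^{N/m}$, essentially the same branch-of-square-root argument the paper performs once for $w_N$. Your justification of the final sentence ($N/m \equiv Nm \pmod{24}$, hence $B \equiv Nb \pmod{24}$ with $N$ invertible modulo $24$) is correct and spells out what the paper leaves as the one-line remark that $m^2 \equiv 1 \pmod{24}$.
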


\begin{proof}
  The first equivalence is immediate from the first part of Proposition~\ref{prop:basic-eta-quotient-slash}. Observe that \[\lm{1}{0}{-N}{1} = w_N \lm{1}{1}{0}{1} w^{-1}_N \]
and so by the second part of Proposition~\ref{prop:basic-eta-quotient-slash} we have $f|_{\sm{1}{0}{N}{1}} = f$ if and only if $f^*|_{\sm{1}{1}{0}{1}} = f^*$. Thus the second equivalence follows from the first equivalence applied to $f^*$.
\end{proof}

Corollary~\ref{cor:eta-quotient-necessary} establishes that Newman's conditions are necessary if $f$ is to transform as a modular form under $\Gamma_1(N)$.  This is the easy direction of the theorem.  We now address the more difficult direction. Recall, again, what we want to prove.

\begin{thm}\label{thm:what-we-want} Suppose $k \in \Z$. If $f|_{\sm{1}{1}{0}{1}}= f$ and $f|_{\sm{1}{0}{N}{1}} = f$ then 
  $f$ is a weakly holomorphic modular form on $\Gamma_1(N)$ with Nebentypus character $\chi(d) = \leg{(-1)^k s}{d}$, where $s = \prod_{m} m^{r_{m}}$.
\end{thm}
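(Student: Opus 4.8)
Following the roadmap sketched in the introduction, I would prove the $\Gamma_1(N)$-modularity in two stages --- first that $f$ is modular for \emph{some} congruence subgroup, then that the hypotheses $f|_{T}=f$ and $f|_{\bar T^{N}}=f$ (with $T=\sm1101$, $\bar T^{N}=\sm10N1$) push this all the way up to $\Gamma_1(N)$ --- and then compute the Nebentypus by evaluating $f|_{\gamma}$ directly on $\Gamma_0(N)$. Holomorphy of $f$ on the upper half-plane is automatic since $\eta$ is holomorphic and non-vanishing there, and the $q$-expansions at the cusps have finite order (as in the Remark after Theorem~\ref{thm:newman}), so all the content of the first claim is the invariance. \emph{Step 1 (modularity for a congruence subgroup).} Using Theorem~\ref{thm:eta-property} I would show $\eta(z)^{2}$ is invariant under $\Gamma(24)$ and each $\eta(mz)/\eta(z)$ ($m\mid N$) under $\Gamma(24m)$: for $\gamma=\sm abcd\in\Gamma(24m)$ with $c\neq0$ one has $m\mid c$, the conjugate $A_{m}\gamma A_{m}^{-1}=\sm{a}{mb}{c/m}{d}$ again lies in $\Gamma_0(24)\cap\Gamma^0(24)$ (where $A_{m}=\sm m001$), and since $j(A_{m},\cdot)\equiv1$ the cocycle $\sigma$ never intervenes, so $\eta(mz)|_{\gamma}=\varepsilon(A_{m}\gamma A_{m}^{-1})\,\eta(mz)$; Theorem~\ref{thm:eta-property} then evaluates $\varepsilon(A_{m}\gamma A_{m}^{-1})/\varepsilon(\gamma)$ as $\leg md$, which is $1$ once $d\equiv1\pmod{24m}$ by elementary properties of the Jacobi symbol (the case $c=0$ being immediate). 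Since $k=\tfrac12\sum_{m}r_{m}\in\Z$ we may write $f=(\eta(z)^{2})^{k}\prod_{m\mid N}\bigl(\eta(mz)/\eta(z)\bigr)^{r_{m}}$, whence $f|_{\gamma}=f$ for all $\gamma\in\Gamma(24N)$.

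\emph{Step 2 (the group-theoretic lemma, and the conclusion for $\Gamma_1(N)$).} I would prove: for every $M$ with $N\mid M$, the subgroup of $\SL_2(\Z)$ generated by $T$, $\bar T^{N}$ and $\Gamma(M)$ is exactly $\Gamma_1(N)$. Since $\Gamma(M)\trianglelefteq\SL_2(\Z)$, this reduces under $\SL_2(\Z)\onto\SL_2(\Z/M\Z)$, and then by the Chinese remainder theorem, to showing for each prime power $p^{a}\| M$ with $p^{b}\| N$ that the subgroup of $\SL_2(\Z/p^{a}\Z)$ generated by $\sm1101$ and $\sm10{p^{b}}1$ is the full preimage of the upper unipotent subgroup of $\SL_2(\Z/p^{b}\Z)$. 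For $p\nmid N$ this is the classical fact $\SL_2(\Z/p^{a}\Z)=\langle\sm1101,\sm1011\rangle$. For $p\mid N$, write $K_{j}\trianglelefteq\SL_2(\Z/p^{a}\Z)$ for the $j$-th principal congruence subgroup, so $K_{j}/K_{j+1}\cong\mathfrak{sl}_{2}(\F_{p})$; the three elements $\sm1{p^{j}}01$, $\sm10{p^{j}}1$ and $\sm1101\sm10{p^{j}}1\sm1{-1}01$, all manifestly in the group $G$ in question and all in $K_{j}$, reduce to $e_{+}$, $e_{-}$ and $e_{-}+h-e_{+}$, which span $\mathfrak{sl}_{2}(\F_{p})$ for \emph{every} prime $p$. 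Hence $K_{j}=(G\cap K_{j})K_{j+1}$ for all $j\ge b$, and a downward induction from $K_{a}=\{1\}$ gives $K_{b}\subseteq G$; since $\sm1101$ supplies the missing unipotent part modulo $K_{b}$, $G$ is the asserted preimage. Combining with Step~1 (taking $M=24N$) and the hypotheses $f|_{T}=f$, $f|_{\bar T^{N}}=f$, we conclude that $f$ is invariant under $\Gamma_1(N)$, hence a weakly holomorphic modular form of weight $k$ on $\Gamma_1(N)$.

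\emph{Step 3 (the Nebentypus).} Given $d$ coprime to $N$ with $d$ odd (so $\gcd(d,4N)=1$), pick $a_{0}$ with $a_{0}d\equiv1\pmod{4N}$ and set $\gamma_{0}=\sm{a_{0}}{(a_{0}d-1)/4N}{4N}{d}\in\Gamma_0(4N)$. Exactly as in Step~1, $\eta(mz)|_{\gamma_{0}}=\varepsilon(A_{m}\gamma_{0}A_{m}^{-1})\,\eta(mz)$, where now $A_{m}\gamma_{0}A_{m}^{-1}=\sm{a_{0}}{m(a_{0}d-1)/4N}{4N/m}{d}$ lies in $\Gamma_0(4)$ but need not be trivial modulo $24$, so I would invoke the general $\Gamma_0(4)$ transformation formula (Theorem~\ref{thm:gamma0-4-thm}) to write each such $\varepsilon$ as an explicit product of Jacobi symbols. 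Taking the product over $m$ with multiplicities $r_{m}$ gives $f|_{\gamma_{0}}=\chi_{0}(d)\,f$ for an explicit root of unity $\chi_{0}(d)$, and the work is to collapse this product --- via quadratic reciprocity and the identity $s=\prod_{m}m^{r_{m}}$ --- down to $\chi_{0}(d)=\leg{(-1)^{k}s}{d}$. Finally, any $\gamma\in\Gamma_0(N)$ with lower-right entry $\equiv d\pmod N$ satisfies $\gamma_{0}^{-1}\gamma\in\Gamma_1(N)$, so, using the $\Gamma_1(N)$-invariance already established, $f|_{\gamma}=f|_{\gamma_{0}}=\chi_{0}(d)\,f$; this is precisely the claimed Nebentypus.

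\emph{Main obstacle.} I expect the heart of the matter to be the group-theoretic lemma of Step~2: checking that the elementary-matrix generation really goes through uniformly over all $M$ and all primes. The decisive point is that $e_{+}$ is directly available as $\sm1{p^{j}}01$ modulo $K_{j+1}$, so the spanning set $\{e_{+},e_{-},e_{-}+h-e_{+}\}$ works with no restriction on the characteristic; without this observation one would be tempted (wrongly) to conclude that $\langle T,\bar T^{N}\rangle$ is too small. The only other laborious ingredient is the Jacobi-symbol bookkeeping in Step~3, which is finite and mechanical once Theorem~\ref{thm:gamma0-4-thm} is in hand.
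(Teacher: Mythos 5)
Your overall architecture is the paper's (it is the roadmap announced in the introduction), and your Step~1 is essentially Lemma~\ref{lem:basic-eta-quotients} together with Corollary~\ref{cor:intermediate-cor}; the cocycle bookkeeping with $A_m=\sm{m}{0}{0}{1}$ is handled correctly. The genuine divergences are in Steps~2 and~3, and each contains a point that needs repair or completion.

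In Step~2 the phrase ``by the Chinese remainder theorem'' is not a valid inference as written: a subgroup of $\prod_p G_p$ generated by two elements can surject onto every factor without being the whole product (a diagonal subgroup already does this), so showing that the image of $\la T,\bar{T}^N\ra$ in each $\SL_2(\Z/p^{a}\Z)$ equals the local group $H_p$ does not by itself yield $\prod_p H_p$. The statement is nevertheless true, and the missing observation is short: the images of $T$ and $\bar{T}^N$ in $\SL_2(\Z/M\Z)$ are unipotent, so each generates a cyclic group whose order factors as a product of prime powers, one per prime dividing $M$; raising each generator to the cofactor of the $p$-part of its order produces elements supported at the single prime $p$, and since elements supported at distinct primes commute, the group you generate really does contain $\prod_p\la t_p,u_p\ra$. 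With that inserted, your filtration argument is correct (the spanning set $\{e_+,e_-,e_-+h-e_+\}$ does work in every characteristic, and the downward induction from $K_a=\{1\}$ is sound). For comparison, the paper sidesteps the issue entirely: it first notes $\la T,\Gamma(\delta N)\ra=\Gamma_1(\delta N)$ and then strips one prime at a time via the claim that $\sm{1}{0}{N}{1}$ and $\Gamma_1(pN)$ generate $\Gamma_1(N)$, verified by explicit matrix identities in $\SL_2(\Z/pN\Z)$ --- less machinery for the same conclusion.

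In Step~3 you defer the computation, and there is a substantive point hiding in it: with $\gamma_0\in\Gamma_0(4N)$ the product $\prod_m\varepsilon(A_m\gamma_0A_m^{-1})^{r_m}$ does \emph{not} collapse to $\leg{(-1)^k s}{d}$ for an arbitrary integer-weight eta-quotient. The Jacobi symbols do collapse to $\leg{s}{d}$ by multiplicativity alone, but the exponential factor $e^{\pi i\sum_m r_m E(a_0,\,mb_0,\,4N/m,\,d)/12}$ equals $e^{\pi i k(d-1)/2}=\leg{-1}{d}^{k}$ only after you invoke \emph{both} Newman congruences: the sum expands as $\bigl(a_0(1-d^2)-d\bigr)\sum_m r_m(4N/m)+db_0\sum_m mr_m+6k(d-1)$, and the first two terms die modulo $24$ precisely because $\sum_m(N/m)r_m\equiv0$ and $\sum_m mr_m\equiv0\pmod{24}$. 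So the bookkeeping is doable but not hypothesis-free, and your write-up should say where the hypotheses enter (quadratic reciprocity, incidentally, is not needed). The paper instead evaluates the character on $\Gamma_0(24N)\cap\Gamma^0(24)$, where Corollary~\ref{cor:intermediate-cor} gives $f|_\gamma=\leg{(-1)^k s}{d}f$ unconditionally, at the cost of only reaching lifts $d$ prime to $6$, and then recovers the residues with $3\mid d$ from the periodicity $\leg{(-1)^k s}{d}=\leg{(-1)^k s}{d+4N}$. Your route buys all odd $d$ at once but pays with the mod-$24$ computation.
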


We have the following key calculation.

\begin{lem}\label{lem:basic-eta-quotients} The function $\eta(z)^2$ is modular of level $\Gamma_0(24) \cap \Gamma^0(24) \cap \Gamma(4)$ while for $m \ge 1$ the quotient $\eta(mz)/\eta(z)$ is modular of level $\Gamma_0(24m) \cap \Gamma^0(24) \cap \Gamma(4m)$. More precisely, we have: 
\begin{enumerate}
  \item If $\gamma = \sm{a}{b}{c}{d} \in \Gamma_0(24) \cap \Gamma^0(24)$, we have
  \[ \eta^2 |_\gamma = \leg{-1}{d} \eta^2 . \]

  \item If $\gamma = \sm{a}{b}{c}{d} \in \Gamma_0(24m) \cap \Gamma^0(24)$, we have
  \[ (\eta(mz)/\eta(z))|_\gamma = \leg{m}{d} (\eta(mz)/\eta(z)). \]  
\end{enumerate}
  
\end{lem}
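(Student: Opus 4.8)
The plan is to obtain both parts of the lemma from Theorem~\ref{thm:eta-property} by a short algebraic manipulation, then rewrite the resulting roots of unity as Jacobi symbols using standard properties of the extended symbol from Appendix~\ref{app:jacobi}: that $\leg{c}{d}^2 = 1$ when $\gcd(c,d) = 1$, multiplicativity in the numerator, the identity $e^{\pi i(d-1)/2} = \leg{-1}{d}$ for odd $d$, and the vanishing of the relevant symbols once one restricts to $\Gamma(4)$ or $\Gamma(4m)$. The only genuinely new ingredient, which enters in part~(2), is a rescaling trick that expresses the transformation behavior of $\eta(mz)$ under $\gamma$ in terms of the transformation behavior of $\eta$ under an auxiliary matrix $\gamma'$.

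For part~(1): since $\eta^2$ has integer weight $1$, the slash operator is an honest action and $\eta^2|_\gamma = (\eta|_\gamma)^2 = \varepsilon(\gamma)^2\,\eta^2$, with no cocycle sign. When $c \neq 0$, Theorem~\ref{thm:eta-property} gives $\varepsilon(\gamma) = \leg{c}{d}e^{\pi i(d-1)/4}$, so $\varepsilon(\gamma)^2 = \leg{c}{d}^2\,e^{\pi i(d-1)/2} = e^{\pi i(d-1)/2} = \leg{-1}{d}$. The remaining case $c = 0$, in which $\gamma = \pm\sm{1}{b}{0}{1}$ with $24 \mid b$, I would treat directly from $\eta|_T = e^{\pi i/12}\eta$ and $\eta^2|_{-I} = -\eta^2$ (both sides then being $\leg{-1}{\pm 1}\,\eta^2$). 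Restricting further to $\Gamma(4)$ forces $d \equiv 1 \pmod 4$, so $\leg{-1}{d} = 1$ and $\eta^2|_\gamma = \eta^2$; this gives the stated level.

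For part~(2): put $g(z) = \eta(mz)/\eta(z)$, which has weight $0$, so $g|_\gamma(z) = g(\gamma z)$. Given $\gamma = \sm{a}{b}{c}{d} \in \Gamma_0(24m) \cap \Gamma^0(24)$ --- hence also in $\Gamma_0(24) \cap \Gamma^0(24)$, as $\Gamma_0(24m) \subseteq \Gamma_0(24)$ --- I would write $c = mc'$ (possible since $24m \mid c$) and set $\gamma' = \sm{a}{mb}{c'}{d}$. The key facts to verify are that $\det\gamma' = ad - bc = 1$, that $\gamma'(mz) = m\,\gamma z$ and $j(\gamma', mz) = cz + d = j(\gamma, z)$, and that $\gamma' \in \Gamma_0(24) \cap \Gamma^0(24)$ (its lower-left entry $c'$ and upper-right entry $mb$ are each divisible by $24$). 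Then, for $c \neq 0$, applying Theorem~\ref{thm:eta-property} to $\gamma$ and to $\gamma'$ (the latter at the point $mz$) and unwinding the definition of the slash yields $\eta(\gamma z) = \leg{c}{d}e^{\pi i(d-1)/4}\,j(\gamma,z)^{1/2}\,\eta(z)$ and $\eta(m\gamma z) = \eta(\gamma'(mz)) = \leg{c'}{d}e^{\pi i(d-1)/4}\,j(\gamma,z)^{1/2}\,\eta(mz)$, where the two square roots agree because they are the square root of the same complex number. Taking the quotient, the phases $e^{\pi i(d-1)/4}$ and the factors $j(\gamma,z)^{1/2}$ cancel, and using $\leg{c}{d} = \leg{m}{d}\leg{c'}{d}$ together with $\leg{m}{d}^2 = 1$ (note $\gcd(m,d) = 1$ from $ad \equiv 1 \pmod m$) we arrive at $g(\gamma z) = \leg{m}{d}\,g(z)$. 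As before, the case $c = 0$ is immediate from $\eta|_T$, and restricting to $\Gamma(4m)$ makes $d \equiv 1 \pmod{4m}$, so $\leg{m}{d} = 1$ and $g|_\gamma = g$.

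The conceptual heart is the auxiliary matrix $\gamma'$ in part~(2): the identities $\gamma'(mz) = m\gamma z$ and $j(\gamma', mz) = j(\gamma, z)$ are exactly what let Theorem~\ref{thm:eta-property} be brought to bear on $\eta(m\gamma z)$. The fussiest point, I expect, will be the bookkeeping with the extended Jacobi symbol --- checking that $e^{\pi i(d-1)/2} = \leg{-1}{d}$ and that $\leg{m}{d} = 1$ when $d \equiv 1 \pmod{4m}$ continue to hold, under the extended conventions, when $d$ is negative, and confirming that the branch conventions for $j(\gamma,z)^{1/2}$ make those factors cancel cleanly in part~(2). These are precisely the facts that the appendix on the extended Jacobi symbol is there to supply.
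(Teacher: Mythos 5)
Your proposal is correct and follows essentially the same route as the paper: part (1) by squaring the formula of Theorem~\ref{thm:eta-property} and identifying $e^{\pi i(d-1)/2}$ with $\leg{-1}{d}$, and part (2) via the auxiliary matrix $\sm{a}{mb}{c/m}{d}$, which is exactly the paper's conjugate $\sm{m}{0}{0}{1}\gamma\sm{m}{0}{0}{1}^{-1}$, with the same Jacobi-symbol cancellation $\leg{c}{d}\leg{c/m}{d}=\leg{m}{d}$ at the end. Your explicit verification that $j(\gamma',mz)^{1/2}=j(\gamma,z)^{1/2}$ is a slightly more careful handling of the branch/cocycle bookkeeping than the paper's slash-operator identity, but it is the same argument.
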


\begin{proof} The claims (1) and (2) imply the first part since $\leg{-1}{d} = 1$ if $d \equiv 1 \pmod{4}$ and $\leg{m}{d} = 1$ if $d \equiv 1 \pmod{4m}$ (see (3),(7) of Prop.~\ref{prop:allow-negative}).

  If $c \neq 0$ then (1) follows from Theorem~\ref{thm:eta-property} and the identity $\leg{-1}{d} = (-1)^{(d-1)/2}$, which holds for all odd $d \in \Z$ by Proposition~\ref{prop:allow-negative}(3). If $c = 0$ then $d = \pm 1$ and the identity can be checked directly.

 For claim (2) the case $c=0$ is similarly straightforward. Suppose $c \neq 0$. Since $\eta(mz) = \eta(z)|_{\sm{m}{0}{0}{1}}$ and \[ \sm{m}{0}{0}{1}\gamma \sm{m}{0}{0}{1}^{-1} = \sm{a}{bm}{c/m}{d} \in \Gamma_0(24) \cap \Gamma^0(24),\]
  we use Theorem~\ref{thm:eta-property} to compute
  \[ \eta(mz)|_{\gamma} = (\eta(z)|_{\sm{m}{0}{0}{1}\gamma \sm{m}{0}{0}{1}^{-1}})|_{\sm{m}{0}{0}{1}} = \leg{c/m}{d} e^{\pi i (d-1)/4} \eta(mz), \]
  and (2) follows on comparing with Theorem~\ref{thm:eta-property} for $\eta(z)|_{\gamma}$. 
\end{proof}

\begin{cor}\label{cor:intermediate-cor}
  Any eta-quotient  $f(z) = \prod_{0 < m \mid N} \eta(mz)^{r_m}$ of integer weight $k$ is (weakly) modular of level $\Gamma_0(24N) \cap \Gamma^0(24) \cap \Gamma(4N)$. Furthermore if $\gamma \in \Gamma_0(24N) \cap \Gamma^0(24)$ then
\[ f|_{\gamma} = \leg{(-1)^k s}{d} f \]
where $s = \prod_{m} m^{r_m}$.
\end{cor}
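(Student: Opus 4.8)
The plan is to reduce the general eta-quotient of integer weight to a product of the two building blocks analyzed in Lemma~\ref{lem:basic-eta-quotients}, namely $\eta(z)^2$ and $\eta(mz)/\eta(z)$, and then to multiply together the corresponding multiplier formulas. Concretely, since each $r_m \in \Z$ and $k = \frac12\sum_m r_m \in \Z$ forces $\sum_m r_m$ to be even, I would write
\[ f(z) = \prod_{0<m\mid N} \eta(mz)^{r_m} = \left(\prod_{0<m\mid N}\left(\frac{\eta(mz)}{\eta(z)}\right)^{r_m}\right)\cdot \eta(z)^{\sum_m r_m} = \left(\prod_{0<m\mid N}\left(\frac{\eta(mz)}{\eta(z)}\right)^{r_m}\right)\cdot (\eta(z)^2)^{k}, \]
noting the $m=1$ factor in the first product is trivial, so only the divisors $m>1$ contribute. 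Thus $f$ is a product (with integer exponents, possibly negative) of the functions $\eta(z)^2$ and $\eta(mz)/\eta(z)$ for $m\mid N$.

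Next I would intersect the levels. By Lemma~\ref{lem:basic-eta-quotients}, $\eta(z)^2$ is modular of level $\Gamma_0(24)\cap\Gamma^0(24)\cap\Gamma(4)$ and, for each $m\mid N$, $\eta(mz)/\eta(z)$ is modular of level $\Gamma_0(24m)\cap\Gamma^0(24)\cap\Gamma(4m)$. Since $\Gamma_0(24m)\subseteq\Gamma_0(24)$, $\Gamma(4m)\subseteq\Gamma(4)$, and $\Gamma_0(24m)\cap\Gamma(4m)$ for all $m\mid N$ all contain $\Gamma_0(24N)\cap\Gamma(4N)$, the common level of all the building blocks contains $\Gamma=\Gamma_0(24N)\cap\Gamma^0(24)\cap\Gamma(4N)$. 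For $\gamma\in\Gamma$ of integer weight the slash operator is multiplicative (no sign ambiguity, by the discussion in Section~\ref{sec:multiplier}, since the combined weight is an integer even though individual factors have half-integer weight — here one must be a little careful, see below), so $f|_\gamma = f$ on $\Gamma$, giving weak modularity of level $\Gamma$.

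For the Nebentypus formula, take $\gamma=\sm{a}{b}{c}{d}\in\Gamma_0(24N)\cap\Gamma^0(24)$. Applying Lemma~\ref{lem:basic-eta-quotients}(1) to the $(\eta^2)^k$ factor gives a multiplier $\leg{-1}{d}^{k}$, and applying part~(2) to each $(\eta(mz)/\eta(z))^{r_m}$ factor gives $\leg{m}{d}^{r_m}$; multiplying and using multiplicativity of the Jacobi symbol in the top argument yields
\[ f|_\gamma = \leg{-1}{d}^{k}\prod_{0<m\mid N}\leg{m}{d}^{r_m} f = \leg{(-1)^k \prod_m m^{r_m}}{d} f = \leg{(-1)^k s}{d} f, \]
as desired. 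I expect the main obstacle to be the bookkeeping around the slash operator when the individual factors have half-integer weight: one should either argue directly that $f$, being a genuine eta-quotient of integer weight, satisfies $f|_{\gamma'\gamma}=(f|_{\gamma'})|_\gamma$ without sign (which is stated in Section~\ref{sec:multiplier}), or verify that the product of the sign cocycles $\sigma(\gamma',\gamma)$ arising from the half-integral-weight factors cancels — in practice the cleanest route is to note that we only ever evaluate the slash on a single $\gamma\in\Gamma$ at a time and combine the pointwise identities $(\eta^2)|_\gamma=\leg{-1}{d}\eta^2$ and $(\eta(mz)/\eta(z))|_\gamma=\leg{m}{d}(\eta(mz)/\eta(z))$ by raising to integer powers and multiplying, which involves no cocycle at all. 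A secondary point to check is that $\gamma\in\Gamma\subseteq\Gamma_1(4N)$ makes $d\equiv 1\pmod{4N}$, so that all the Jacobi symbols above equal $1$ on $\Gamma$, consistent with $f|_\gamma=f$ there.
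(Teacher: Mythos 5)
Your proposal is correct and follows essentially the same route as the paper: the same decomposition $f = (\eta^2)^k\prod_{m>1}(\eta(mz)/\eta(z))^{r_m}$, the same intersection of the levels from Lemma~\ref{lem:basic-eta-quotients}, and the same multiplication of the two multiplier formulas to get $\leg{(-1)^k s}{d}$. Your side remarks (that slashing a product by a single $\gamma$ is multiplicative factor-by-factor with no cocycle, and that $d\equiv 1\pmod{4N}$ on the full level group kills the Jacobi symbols) are also exactly the points the paper relies on.
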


\begin{proof}
  Indeed, we may rewrite 
\[ f(z) = \eta(z)^{2k} \prod_{1 < m \mid N} (\eta(mz)/\eta(z))^{r_m} \]
and applying Lemma~\ref{lem:basic-eta-quotients} we conclude that $f(z)$ is modular of level $\cap_{m \mid N} (\Gamma_0(24m) \cap \Gamma^0(24) \cap \Gamma(4m)) = \Gamma_0(24N) \cap \Gamma^0(24) \cap \Gamma(4N)$. The claim about the Nebentypus follows by multiplicativity from Lemma~\ref{lem:basic-eta-quotients}(1),(2).
\end{proof}

To deduce that an eta-quotient $f$ as in Theorem~\ref{thm:what-we-want} is modular of level $\Gamma_1(N)$, it remains to check the following.

\begin{prop}\label{prop:congruence-subgroup}
  Any congruence subgroup $\Gamma$ of $\SL_2(\Z)$ that contains $\sm{1}{1}{0}{1}$ and $\sm{1}{0}{N}{1}$ contains $\Gamma_1(N)$.
\end{prop}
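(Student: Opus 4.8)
The plan is to show that the subgroup $\Gamma' = \langle \Gamma, T, \bar T^N \rangle$ generated by a congruence subgroup $\Gamma$ together with $T = \sm{1}{1}{0}{1}$ and $\bar T^N = \sm{1}{0}{N}{1}$ is all of $\Gamma_1(N)$. Since $\Gamma'$ is sandwiched between a congruence subgroup and $\SL_2(\Z)$, it is itself a congruence subgroup; fix $M$ with $\Gamma(M) \subset \Gamma$, so $\Gamma(MN) \subset \Gamma' \cap \Gamma_1(N)$. Because $\Gamma'$ contains $T$ and $\bar T^N$, it certainly lies in $\Gamma_1(N)$ (both generators are $\equiv I$ or unipotent mod $N$ in the right way), so it suffices to prove the reverse inclusion $\Gamma_1(N) \subset \Gamma'$. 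I would prove this by exhibiting, for an arbitrary $\gamma \in \Gamma_1(N)$, a word in $T$, $\bar T^N$, and their inverses that carries $\gamma$ into $\Gamma(MN)$ — equivalently, a Euclidean-type reduction of $\gamma$ modulo $MN$ using only these generators.

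The key mechanism is that $T$ acts on the bottom row $(c,d)$ of a matrix in $\SL_2(\Z)$ by $(c,d) \mapsto (c, d+c)$ (right multiplication by $T$ adds the first column to the second — let me instead track what is convenient: right multiplication by $\bar T^N$ sends $(c,d)\mapsto (c+Nd, d)$, and right multiplication by $T$ sends $(a,b)\mapsto (a, a+b)$, affecting the top row). A cleaner route: $T$ and $\bar T^N$ generate a group acting on row vectors, and modulo $MN$ we want to move the bottom row $(c,d)$ of $\gamma$, with $c \equiv 0$ and $d \equiv 1 \pmod N$, to $(0,1) \pmod{MN}$. I would argue as follows. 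Write $c = Nc'$. First, using right multiplication by powers of $\bar T^N$ (which adds multiples of $Nd$ to $c$) and powers of $T$ (which adds multiples of $c$ to $d$), one runs the Euclidean algorithm on the pair $(Nc', d)$: since $\gcd(Nc', d) \mid \gcd(c,d) = 1$ but we only need to work modulo $MN$ and $d$ is a unit mod $N$, one can drive $c'$ down to $0$ modulo $M$ — the point being that $\gcd(d, M)$ divides $\gcd(c,d)=1$ is not quite right, so more carefully one first uses the $\bar T^N$-moves to reach some $c'$ with $c' \equiv 0 \pmod M$ (possible since $d$ is invertible mod $M$, as $\gcd(c,d)=1$ forces $\gcd(d, M) \mid c$, and iterating the two moves realizes the subtractive Euclidean algorithm on $(c', d)$ with steps of size $d$ and of size $c'$ respectively). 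Having arranged $c \equiv 0 \pmod{MN}$, the matrix is now in $\Gamma_1(N)$ with bottom-left entry divisible by $MN$; its bottom row is $(MN t, d)$ with $Nt\cdot b' \equiv$ unit, forcing $d \equiv \pm 1$; absorbing a power of $T$ then clears the top-right entry modulo $MN$ as well, landing in $\Gamma(MN) \subset \Gamma'$.

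I expect the main obstacle to be making the reduction modulo $MN$ fully rigorous — in particular verifying that the two moves $(c,d)\mapsto(c+Nd,d)$ and $(c,d)\mapsto(c,d+c)$, which are exactly the moves of a "twisted" Euclidean algorithm, suffice to reach $(0,1) \pmod{MN}$ from any $(c,d)$ with $c\equiv 0,\ d\equiv 1 \pmod N$ and $\gcd(c,d)=1$. The cleanest way to nail this down is: work in $\SL_2(\Z/MN\Z)$, note the image of $\langle T, \bar T^N\rangle$ there, and show it acts transitively on the relevant set of bottom rows (those $\equiv (0,1) \bmod N$); then since $\Gamma(MN) \subset \Gamma$, and stabilizing the bottom row $(0,1)$ in $\SL_2$ modulo $MN$ leaves only a unipotent upper-triangular part killed by a further power of $T$, we conclude. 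A slightly slicker alternative avoiding coordinates: the congruence subgroup $\Gamma_1(N)$ is generated by $\Gamma(MN)$ together with finitely many coset representatives, and one checks each representative lies in $\langle T, \bar T^N, \Gamma(MN)\rangle$ by the same row reduction; but the group-action formulation is what I would write up.
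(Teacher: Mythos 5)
Your overall strategy --- reduce modulo $MN$ where $\Gamma(M)\subset\Gamma$, and show that right multiplication by powers of $T$ and $\bar{T}^N$ carries the bottom row of any $\gamma\in\Gamma_1(N)$ to $(0,1)$ modulo $MN$, hence carries $\gamma$ into $\Gamma(MN)\subset\Gamma$ --- is sound and close in spirit to the paper's proof, which performs the same kind of row reduction but in $\SL_2(\Z/pN\Z)$, one prime at a time. However, the execution has a genuine gap at the decisive step. You propose to first drive $c$ to $0$ modulo $MN$ using the $\bar{T}^N$-moves $(c,d)\mapsto(c+kNd,d)$, justified by ``$d$ is invertible mod $M$''; but $\gcd(c,d)=1$ does not make $d$ a unit modulo $M$, so that step is not available in general. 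Worse, the order of operations is fatal: once $c\equiv 0\pmod{MN}$, the $T$-moves $(c,d)\mapsto(c,d+kc)$ change $d$ only by multiples of $MN$, so $d$ can no longer be repaired modulo $MN$. Your assertion that reaching bottom row $(MNt,d)$ ``forces $d\equiv\pm 1$'' is false: the matrix may reduce to a nontrivial diagonal element $\sm{u^{-1}}{0}{0}{u}$ of $\SL_2(\Z/MN\Z)$ with $u\equiv 1\pmod N$ but $u\not\equiv\pm1\pmod{MN}$, and such diagonal elements are exactly the obstruction your sketch does not address. (The paper confronts this head-on with an explicit identity expressing $\sm{u^{-1}}{0}{0}{u}$ as a word in $T$ and $\bar{T}^N$ modulo $pN$.)

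The argument can be repaired within your framework by reordering and justifying the steps: (i) since $\gcd(c/N,d)=1$, a CRT argument over the primes dividing $M$ shows some $\bar{T}^N$-move makes $c/N$ coprime to $M$, so that $\gcd(c,MN)=N$; (ii) now $d+kc$ realizes every residue modulo $MN$ that is $\equiv d\equiv 1\pmod N$, so a $T$-move arranges $d\equiv 1\pmod{MN}$; (iii) only now, with $d$ a unit modulo $MN$, do the $\bar{T}^N$-moves make $c\equiv 0\pmod{MN}$ without disturbing $d$; (iv) the determinant then forces $a\equiv 1\pmod{MN}$, and a final power of $T$ clears $b$. With (i)--(ii) supplied your proof goes through, and is arguably more direct than the paper's prime-by-prime induction; without them it does not. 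A minor additional slip: your claim that $\langle\Gamma,T,\bar{T}^N\rangle\subset\Gamma_1(N)$ is false in general, since $\Gamma$ is an arbitrary congruence subgroup (e.g.\ all of $\SL_2(\Z)$); this is harmless because only the reverse inclusion is needed, but the proposition asserts containment, not equality.
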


\begin{proof}
Suppose $\Gamma \supset \Gamma(M)$ for some integer $M$. Enlarging $M$ if necessary we may assume $N \mid M$. Writing $M = \delta N$, since $\Gamma(\delta N)$ and $T$ generate $\Gamma_1(\delta N)$, we see that $\Gamma \supset \Gamma_1(\delta N)$. It now suffices to prove (for all primes $p$ and positive integers $N$) that $\sm{1}{0}{N}{1}$ and $\Gamma_1(pN)$ together generate $\Gamma_1(N)$. We can then apply this iteratively to remove one prime at a time from $\delta$ until we obtain $\Gamma \supset \Gamma_1(N)$.

 Our claim is equivalent to showing that the image of $\Gamma_1(N)$ in $\SL_2(\Z/pN\Z)$ is contained in the image of $\la \sm{1}{0}{N}{1}, \Gamma_1(pN) \ra$. If $\gamma \in \Gamma_1(N)$, its image $\overline{\gamma}$ in $\SL_2(\Z/pN\Z)$ has the shape
 \[ \lm{  1 + aN }{b}{ cN }{1+dN} \]
 with $a,c,d \in \Z/p\Z$. First suppose that $c = 0$. Then $u := 1 + dN$ is invertible in $\Z/pN\Z$ and $\overline{\gamma} = \lm{ u^{-1} }{0}{ 0 }u \lm{  1 }{ub}{0}{1}$, and so it suffices to compute that
  \[ \lm{u^{-1}}{0}{0}{u}  = \lm{1}{0}{-uN}{1}\lm{1}{-d  + u^{-1} d^2 N}{0}{1} \lm{1}{0}{N}{1} \lm{1}{d}{0}{1}.   \]
 If instead $c \neq 0$, let $c^{-1} \in \Z$ be any inverse of $c$ modulo $p$. Then 
  \[  \lm{  1 + aN }{b}{ cN }{1+dN} = \lm{1}{ac^{-1}}{0}{1} \lm{1}{0}{cN}{1}  \lm{1}{b'}{0}{1}\] for a suitable choice of $b'$, and so again we are done.
\end{proof}

Summarizing, under the hypotheses of Theorem~\ref{thm:what-we-want} we have proved that $f$ is modular of level $\Gamma_1(N)$, and that 
if $\gamma \in \Gamma_0(24N) \cap \Gamma^0(24)$ then
\[ f|_{\gamma} = \tleg{(-1)^k s}{d} f. \]
The Nebentypus of $f$ is the character $\chi : (\Z/N\Z)^{\times} \to \C^{\times}$ defined by the formula $f|_{\sm{a}{b}{c}{d}} = \chi(\overline{d}) f$ for any $\sm{a}{b}{c}{d} \in \Gamma_0(N)$ with $d$ lifting $\overline{d} \in \Z/N\Z$. Suppose $d \in \Z$ is any lift of $\overline{d} \in (\Z/N\Z)^{\times}$ that is relatively prime to $6$. Let $a$ be any inverse of $d$ modulo $24^2 N$, and select $b,c$ with $bc = ad-1$ and $24 \mid b$, $24N \mid c$. Then $\sm{a}{b}{c}{d} \in \Gamma_0(24N) \cap \Gamma^0(24) \subset\Gamma_0(N)$ with $d$ lifting $\overline{d}$, and $\chi(\overline{d})= \leg{(-1)^k s}{d}$ by Corollary~\ref{cor:intermediate-cor}. Thus the Nebentypus of $f$ is as claimed in Theorem~\ref{thm:newman}, except that so far we are restricted to computing $\chi(\overline{d})$ using lifts $d$ with $(d,6) = 1$ rather than just $(d,2) = 1$. This is not an issue if $3 \mid N$, so assume $3 \nmid N$.

Note that $\leg{(-1)^k s}{d} = \leg{(-1)^k s'}{d}$ where $s'$ is the squarefree part of $s$. It follows by Proposition~\ref{prop:allow-negative}(7) that
 $\leg{(-1)^k s}{d}$ is periodic in $d$ with period dividing $4N$. If $3 \mid d$, then $3 \nmid d+4N$, and $\leg{(-1)^k s}{d} = \leg{(-1)^k s}{d+4N}$. So computing $\leg{(-1)^k s}{d}$ for odd lifts of $\overline{d}$ with $3 \mid d$ gives the same answer as for odd lifts not divisible by $3$, and we are done.

\section{Proof of the multiplier system identity}

For our exposition to be complete, we must prove Theorem~\ref{thm:eta-property}. In fact Theorem~\ref{thm:eta-property} is an immediate corollary of the following more general result whose formulation  we learned from \cite{MR2070444}. 

\begin{thm}\label{thm:gamma0-4-thm}
 Define $E(a,b,c,d) =  ac(1-d^2) + d(b-c+3) - 3.$ Then for each $\gamma = \sm{a}{b}{c}{d} \in \Gamma_0(4)$ we have
  \[  \varepsilon(\gamma) =  \leg{c}{d} e^{\pi i E(a,b,c,d) / 12 }. \]
\end{thm}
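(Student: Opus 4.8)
The plan is to reduce the general matrix $\gamma = \sm{a}{b}{c}{d} \in \Gamma_0(4)$ to a normal form using the subgroup generated by $T = \sm{1}{1}{0}{1}$, which acts on the right and left by column/row operations that preserve $\Gamma_0(4)$, and then to verify the claimed formula directly on the normal form. Concretely: right-multiplication by $T^n$ sends $(a,b,c,d)$ to $(a, an+b, c, cn+d)$, and left-multiplication by $T^n$ sends it to $(a+nc, b+nd, c, d)$; in both cases $c$ is unchanged and $4 \mid c$ is preserved. Using $\varepsilon(T) = e^{\pi i/12}$ and the cocycle relation $\varepsilon(\gamma'\gamma) = \varepsilon(\gamma')\varepsilon(\gamma)\sigma(\gamma',\gamma)$, I would track how both sides of the claimed identity transform under these moves. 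The right-hand side involves $\leg{c}{d}$ and $e^{\pi i E/12}$; one checks that $E(a,b,c,d)$ changes in a controlled way (a combination of multiples of $12$ plus a term matching the $\varepsilon(T)$ contribution), while $\leg{c}{d}$ either is unchanged or picks up a sign governed by quadratic reciprocity for the extended Jacobi symbol (the appendix results on $\leg{c}{d}$ with $d$ possibly negative or even $c$ negative are what make this bookkeeping uniform). The key point is that the \emph{discrepancy} between the two sides is invariant under left and right multiplication by $T$, so it suffices to evaluate it on one representative in each orbit.

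The orbit analysis splits into the case $c = 0$ and the case $c \neq 0$. If $c = 0$ then $ad = 1$, so $\gamma = \pm T^b$, and the formula is checked by hand using $\varepsilon(T) = e^{\pi i /12}$ and $\varepsilon(-I) = ?$ — more precisely one verifies $\varepsilon(\sm{-1}{0}{0}{-1})$ against $\leg{0}{-1} e^{\pi i E(-1,0,0,-1)/12}$, which is where the convention $\leg{0}{-1} = -1$ (flagged in the footnote) is pinned down. If $c \neq 0$, then after left/right $T$-translations I can assume $0 < d \le |c|$ (or a similar inequality), and then I would like to induct on $|c|$: writing $\gamma$ in terms of $S = \sm{0}{-1}{1}{0}$ and a smaller matrix. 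The obstruction here is that $S \notin \Gamma_0(4)$, so the naive continued-fraction / Euclidean descent inside $\Gamma_0(4)$ is not available; instead one should descend using $w_4 = \sm{0}{-1}{4}{0}$ or the identity $\sm{0}{-1}{1}{0}\sm{1}{0}{-c}{1} = \sm{c}{-1}{1}{0}$-type relations that stay inside $\Gamma_0(4)$ after conjugation, reducing $|c|$ by a factor while staying in the group. Each descent step multiplies $\varepsilon$ by a known root of unity (computed from $\varepsilon(S)$, $\varepsilon(T)$ and the sign cocycle $\sigma$), and one checks the right-hand side transforms compatibly, again via reciprocity for $\leg{\cdot}{\cdot}$.

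The main obstacle I anticipate is precisely this descent step: unlike $\SL_2(\Z)$, the group $\Gamma_0(4)$ is not generated by two obvious elements, so one must find the right generating set (e.g. $T$, $-I$, and $\sm{1}{0}{4}{1}$, or $T$ together with $w_4$-conjugates of $T$) and the right descent, and then verify the \emph{sign cocycle} $\sigma(\gamma',\gamma)$ at each step — this is the genuinely delicate piece, since $j(\gamma,z)^{1/2}$ is only defined up to sign and the branch conventions (argument in $(-\pi/2, \pi/2]$) must be tracked carefully when $\gamma z$ crosses the imaginary axis. A clean way to sidestep some of this is to prove the formula is multiplicative in the appropriate sense on a generating set and reduce to a finite check, but the branch-of-square-root bookkeeping cannot be entirely avoided. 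Once the descent and sign tracking are in place, the verification that $E(a,b,c,d)$ and $\leg{c}{d}$ transform correctly under the generators is a finite, if somewhat tedious, computation using the congruence $4 \mid c$ and the extended-Jacobi-symbol identities collected in Appendix~\ref{app:jacobi}.
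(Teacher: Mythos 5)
Your Plan A --- normalize by left/right multiplication by $T$ and then run a Euclidean descent on $|c|$ --- has a gap that you yourself flag but do not close: the descent step is not available inside $\Gamma_0(4)$. The matrix $S$ is not in $\Gamma_0(4)$, and $w_4 = \sm{0}{-1}{4}{0}$ is not even in $\SL_2(\Z)$ (it has determinant $4$), so $\varepsilon$ is undefined on it; the ``$\sm{0}{-1}{1}{0}\sm{1}{0}{-c}{1}$-type'' identities you propose likewise exit the group. Your Plan B --- verify the cocycle relation $\varepsilon(\gamma'\gamma)=\varepsilon(\gamma')\varepsilon(\gamma)\sigma(\gamma',\gamma)$ against a generating set and reduce to a finite check --- is the approach that actually works (and is the paper's), and you even name the correct generating set $\{T,\,-I,\,\sm{1}{0}{4}{1}\}$; but the whole argument hinges on the fact that these three matrices really do generate $\Gamma_0(4)$, which you list as an anticipated obstacle rather than establish. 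The point you are missing is that $\Gamma_1(N)$ \emph{is} generated by $\sm{1}{1}{0}{1}$ and $\sm{1}{0}{N}{1}$ for $N\le 4$ (a Euclidean-algorithm fact special to small $N$); without this, your ``finite check'' does not cover the general element. The remaining ingredients --- making the sign cocycle $\sigma$ explicit, showing $\leg{c'}{d'}=\leg{c}{d}$ under $\gamma\mapsto\bar{T}^4\gamma$ when $a>0$ (after reducing to $a>0$ via the $-I$ case), and the congruences for $E$ modulo $8$ and modulo $3$ --- are correctly anticipated as finite bookkeeping, though none of it is carried out.

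There is also one concrete error: you assert that the convention is $\leg{0}{-1}=-1$. It is $\leg{0}{\pm 1}=1$, as stated immediately after the theorem, and this is forced by the check at $\gamma=-I$: one computes $E(-1,0,0,-1)=-6$, so the right-hand side of the claimed formula is $\leg{0}{-1}\,e^{-\pi i/2}$, while $\varepsilon(-I)=-i$ with the branch $j(-I,z)^{1/2}=i$; hence $\leg{0}{-1}=1$. The footnote you cite says the opposite convention would be needed only if one changed the branch of $j(\gamma,z)^{1/2}$ to have argument in $[-\frac{\pi}{2},\frac{\pi}{2})$, which the paper does not do.
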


Recall that we take $\leg{c}{d} = \sgn(c) \leg{c}{|d|}$ if $d < 0$, and conventionally $\leg{0}{\pm 1} = 1$.

\begin{remark}
  In fact the theorem holds verbatim for $\gamma \in \Gamma_0(2)$, but we will give the details only for $\Gamma_0(4)$, as the proof for $\Gamma_0(4)$ involves somewhat less case work;\ however, see Exercises~\ref{ex:gamma-2-sub-1} and~\ref{ex:gamma-2-final} for a sketch of the missing details for $\Gamma_0(2)$. 

  Since $\Gamma_0(2)$ has finite index in $\SL_2(\Z)$, there is a sense in which Theorem~\ref{thm:gamma0-4-thm} actually computes the whole multiplier system:\ namely, choose left coset representatives $\gamma_1,\gamma_2,\gamma_3$ for $\Gamma_0(2)$, and compute $\varepsilon(\gamma_i)$ for each. Then, writing a general element $\gamma \in \SL_2(\Z)$ as $\gamma_i \gamma'$ with $\gamma' \in \Gamma_0(2)$, we have $\varepsilon(\gamma) = \varepsilon(\gamma_i) \varepsilon(\gamma') \sigma(\gamma_i,\gamma')$.
\end{remark}

We turn to the proof of Theorem~\ref{thm:gamma0-4-thm}. Although $\Gamma_1(N)$ is not generated by  $\sm{1}{1}{0}{1}$ and $\sm{1}{0}{N}{1}$ in general, it is true for $N \le 4$ by a standard Euclidean algorithm argument.
Thus $\Gamma_0(4)$ is generated by $T = \sm{1}{1}{0}{1}$, $\bar{T}^4 = \sm{1}{0}{4}{1}$, and $-I = \sm{-1}{0}{0}{-1}$. So, to prove the theorem, it suffices to check the theorem for $T$, $\bar{T}^4$, and $-I$, and to show that $\varepsilon$ as defined in Theorem~\ref{thm:gamma0-4-thm} satisfies the multiplicative relation
\begin{equation}\label{eq:multip} \varepsilon(\gamma'\gamma)  =  \varepsilon(\gamma') \varepsilon(\gamma) \sigma(\gamma', \gamma). 
\end{equation}
Moreover to check the multiplicative relation \eqref{eq:multip} it suffices to consider  the case where $\gamma'$ is one of our three generators $T, \bar{T}^4, -I$. We now carry out this plan.

\begin{lem}
  The theorem holds for $T$, $\bar{T}^4$, and $-I$;\ namely we have $\varepsilon(T) = e^{\pi i/12}$, $\varepsilon(-I) = -i$, and $\varepsilon(\bar{T}^4) = e^{-\pi i/3}$.
\end{lem}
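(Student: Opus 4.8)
The statement really has two halves: (i) that the closed form $\leg{c}{d}\,e^{\pi i E(a,b,c,d)/12}$ from Theorem~\ref{thm:gamma0-4-thm} takes the asserted values on $T$, $\bar T^4$, and $-I$; and (ii) that those values genuinely equal $\varepsilon(T)$, $\varepsilon(\bar T^4)$, $\varepsilon(-I)$. Part (i) is a routine substitution into the formula for $E$: one gets $E(1,1,0,1)=1$, $E(1,0,4,1)=-4$, and $E(-1,0,0,-1)=-6$, while $\leg{0}{1}=\leg{4}{1}=\leg{0}{-1}=1$ (the last by the stated convention), producing $e^{\pi i/12}$, $e^{-\pi i/3}$, and $e^{-\pi i/2}=-i$ respectively. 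So everything reduces to (ii).

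For $T$ there is nothing to prove: $\varepsilon(T)=e^{\pi i/12}$ is literally one of the two transformation laws recorded in \eqref{eq:explicit-eta}. For $-I$, I would unwind the slash operator directly. Since $-I$ fixes every $z$ and $j(-I,z)=-1$, we have $\eta|_{-I}=\eta(z)\,j(-I,z)^{-1/2}$, and the branch convention — the argument of $j(\gamma,z)^{1/2}$ lies in $(-\tfrac\pi2,\tfrac\pi2]$ — forces $(-1)^{1/2}=i$ rather than $-i$. Hence $\varepsilon(-I)=i^{-1}=-i$. (This is precisely where the asymmetric half-open convention, flagged in the footnote of Section~\ref{sec:multiplier}, matters.)

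The only case with real content is $\bar T^4=\sm{1}{0}{4}{1}$. Here I would write the fractional-linear action as the composite $z\mapsto -1/z\mapsto -1/z-4\mapsto \frac{-1}{-1/z-4}=\frac{z}{4z+1}$, i.e.\ realize $\bar T^4$ — up to $\pm I$, which is harmless for computing $\eta(\bar T^4 z)$ — as $S\,T^{-4}\,S$. Applying in turn $\eta(-1/w)=e^{-\pi i/4}w^{1/2}\eta(w)$, then $\eta(w-4)=e^{-\pi i/3}\eta(w)$, then the $S$-law again, yields
\[ \eta\!\left(\tfrac{z}{4z+1}\right)=e^{-\pi i/2}\,e^{-\pi i/3}\,(u-4)^{1/2}\,z^{1/2}\,\eta(z),\qquad u=-1/z, \]
and dividing by $j(\bar T^4,z)^{1/2}=(4z+1)^{1/2}$ gives $\eta|_{\bar T^4}=e^{-\pi i/3}\cdot\frac{(u-4)^{1/2}z^{1/2}}{(4z+1)^{1/2}}\cdot\eta$. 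The bracketed factor squares to $\frac{(u-4)z}{4z+1}=\frac{-(4z+1)}{4z+1}=-1$, so it is $\pm i$, and the job is to pin down the sign.

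That sign is the main obstacle. All three square roots use the "first-quadrant" branch (argument in $(0,\tfrac\pi2)$ for inputs in the upper half-plane), so the question is whether $(u-4)^{1/2}z^{1/2}$ agrees with or negates the branch value of $((u-4)z)^{1/2}$. I would settle it by comparing arguments: for $z$ in the upper half-plane, adding the positive real $1$ to $4z$ strictly decreases the argument, so $0<\arg(4z+1)<\arg(z)<\pi$; feeding this into $\arg(u-4)=\arg(4z+1)-\arg(z)+\pi$ gives $\arg\big((u-4)^{1/2}z^{1/2}\big)=\tfrac12\big(\arg(4z+1)+\pi\big)$, so the quotient has argument exactly $\tfrac\pi2$ and equals $+i$. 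Hence $\varepsilon(\bar T^4)=e^{-\pi i/2}\cdot i\cdot e^{-\pi i/3}=e^{-\pi i/3}$, as claimed. (One could instead extract $\varepsilon(\bar T^4)$ from $\varepsilon(S)$ and $\varepsilon(T)$ using the cocycle relation \eqref{eq:slash}, but this just repackages the same branch bookkeeping into the computation of the signs $\sigma(\gamma',\gamma)$.)
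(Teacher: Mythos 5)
Your proof is correct and follows essentially the same route as the paper: the $\bar T^4$ case via $S T^{-4} S$ is exactly the paper's substitution of $-(4z+1)/z$ into the $\eta(-1/z)$ law, and your argument-counting to pin the ambiguous square-root factor to $+i$ is the same branch analysis the paper performs. Your explicit verification that the closed formula $\leg{c}{d}e^{\pi i E/12}$ evaluates to these three values is a small completeness bonus that the paper leaves implicit.
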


\begin{proof} 
  We have already seen $\varepsilon(T) = e^{\pi i/12}$. Since 
  \[ \eta(z) = \eta( (-I) \cdot z ) = \varepsilon(-I) (-1)^{\frac12} \eta(z) \]
  where $(-1)^{\frac 12}$ has been chosen to be $i$, we find $\varepsilon(-I) = -i$ as claimed. For $\bar{T}^4$ we apply the transformation formula for $\eta(-1/z)$ with $-(4z+1)/z$ substituted for $z$ to obtain
\begin{align*}
  \eta\left(\frac{z}{4z+1}\right) & = e^{-\pi i/4} \left(-\frac{4z+1}{z}\right)^{\frac 12} \eta\left(\frac{-4z-1}{z}\right) \\
  & = e^{-\pi i /4} e^{-\pi i /3} \left(-\frac{4z+1}{z}\right)^{\frac 12} \eta\left(-\frac{1}{z}\right)\\
  & = e^{-\pi i /2} e^{-\pi i / 3 }\left(-\frac{4z+1}{z}\right)^{\frac 12}  z^{\frac12} \eta(z).
\end{align*}
We have chosen branches of square roots so that all three of $z^{\frac 12}$, $\left(-\frac{4z+1}{z}\right)^{\frac 12}$, and $(4z+1)^{\frac 12}$ lie in the first quadrant. Therefore $z^{\frac 12} \cdot \left(-\frac{4z+1}{z}\right)^{\frac 12}$ lies in the upper half-plane, and so we must have $z^{\frac 12} \cdot \left(-\frac{4z+1}{z}\right)^{\frac 12} e^{-\pi i/2} = (4z+1)^{\frac 12}$ (since both sides have positive real part while $-(4z+1)^{\frac 12}$ has negative real part). Therefore $\eta(z/(4z+1)) = e^{-\pi i/3} (4z+1)^{\frac 12} \eta(z)$ and $\varepsilon(\bar{T}^4) = e^{-\pi i/3}$ as claimed.
\end{proof}

Before we address the multiplicative relation \eqref{eq:multip}, we must make the sign $\sigma(\gamma',\gamma)$ explicit.

\begin{defn}\label{defn:gamma-sign} Suppose $\gamma  = \sm{a}{b}{c}{d} \in \SL_2(\Z)$. We write
 \[\begin{cases}
   \gamma > 0 & \text{if } c>0 \text{ or } c=0,d < 0\\
   \gamma < 0 & \text{if } c<0 \text{ or } c=0,d > 0.\\
 \end{cases}\]
 We will also write $\sgn(\gamma)=1$ if $\gamma>0$ and $\sgn(\gamma) = -1$ if $\gamma < 0$. 
\end{defn}

In particular $I < 0$ and $-I > 0$. Making the opposite choice would complicate the statement of the following lemma. This is an artifact of having chosen $j(-I,z)^{\frac 12} = i$ rather than $j(-I,z)^{\frac 12} = -i$.

\begin{lemma}\label{lem:sign-cocycle}
   We have  $\sigma(\gamma',\gamma) = -1$ if either (i)  $\gamma',\gamma > 0$ and $\gamma'\gamma < 0$, or else (ii) $\gamma',\gamma < 0$ and $\gamma'\gamma > 0$. We have $\sigma(\gamma',\gamma) = 1$ otherwise.
\end{lemma}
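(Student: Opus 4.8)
The plan is to reduce the whole statement to tracking one real-valued quantity. Fix $z$ in the upper half-plane, and for $\gamma \in \SL_2(\Z)$ set $\theta_z(\gamma) = \arg j(\gamma,z)^{\frac12} \in (-\frac\pi2, \frac\pi2]$. First I would record how $\theta_z(\gamma)$ depends on $\gamma = \sm{a}{b}{c}{d}$: if $c > 0$ then $j(\gamma,z) = cz+d$ lies in the open upper half-plane, so $\theta_z(\gamma) \in (0, \frac\pi2)$; if $c < 0$ then $\theta_z(\gamma) \in (-\frac\pi2, 0)$; and if $c = 0$ then $d = \pm 1$, with $\theta_z(\gamma) = 0$ when $d = 1$ and $\theta_z(\gamma) = \frac\pi2$ when $d = -1$ (using $j(-I,z)^{\frac12} = i$). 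Comparing with Definition~\ref{defn:gamma-sign}, this is exactly the assertion that $\gamma > 0 \iff \theta_z(\gamma) \in (0, \frac\pi2]$ and $\gamma < 0 \iff \theta_z(\gamma) \in (-\frac\pi2, 0]$, so in particular the sign of $\gamma$ does not depend on the choice of $z$.

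Next I would restate the definition of $\sigma(\gamma',\gamma)$ in these terms. The number $j(\gamma,z)^{\frac12}\, j(\gamma',\gamma z)^{\frac12}$ is a square root of $j(\gamma'\gamma,z)$ whose argument is $\theta_z(\gamma) + \theta_{\gamma z}(\gamma')$ --- note that $\gamma z$ is again in the upper half-plane, so the first step applies to $\gamma'$ evaluated at $\gamma z$ --- and this sum lies in $(-\pi, \pi]$. Hence this product equals the principal square root $j(\gamma'\gamma,z)^{\frac12}$, i.e.\ $\sigma(\gamma',\gamma) = 1$, precisely when $\theta_z(\gamma) + \theta_{\gamma z}(\gamma') \in (-\frac\pi2, \frac\pi2]$; otherwise $\sigma(\gamma',\gamma) = -1$, and then $\theta_z(\gamma'\gamma)$ is that sum shifted by $\pm\pi$ back into $(-\frac\pi2,\frac\pi2]$.

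Finally I would split into cases according to the signs of $\gamma$ and $\gamma'$. If they are opposite, one of $\theta_z(\gamma), \theta_{\gamma z}(\gamma')$ lies in $(0, \frac\pi2]$ and the other in $(-\frac\pi2, 0]$, so the sum lies in $(-\frac\pi2, \frac\pi2]$ and $\sigma(\gamma',\gamma) = 1$, as required. If $\gamma, \gamma' > 0$ then the sum lies in $(0, \pi]$; it lies in $(-\frac\pi2, \frac\pi2]$ iff it is at most $\frac\pi2$, which by the shift formula and the first step happens iff $\theta_z(\gamma'\gamma) \in (0, \frac\pi2]$, iff $\gamma'\gamma > 0$. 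So in this case $\sigma(\gamma',\gamma) = -1$ exactly when $\gamma'\gamma < 0$, which is case (i); the case $\gamma, \gamma' < 0$ is identical with all inequalities reversed and yields case (ii). Every remaining combination has $\sigma(\gamma',\gamma) = 1$, as the lemma claims.

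None of this is hard; the only point requiring genuine care is the bookkeeping of the half-open endpoints, and in particular arranging the $c = 0$ boundary values ($\theta_z = 0$ versus $\frac\pi2$) to line up with the deliberately lopsided Definition~\ref{defn:gamma-sign}. That asymmetry is the one place where the convention $j(-I,z)^{\frac12} = i$ (rather than $-i$) makes itself felt --- which is precisely the content of the remark following that definition.
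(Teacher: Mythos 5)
Your argument is correct and is essentially the paper's own proof: both rest on the observation that $\gamma>0$ iff $\arg j(\gamma,z)^{\frac12}\in(0,\frac{\pi}{2}]$ and $\gamma<0$ iff $\arg j(\gamma,z)^{\frac12}\in(-\frac{\pi}{2},0]$, followed by adding arguments and a case split on the signs of $\gamma,\gamma'$. You simply write out the endpoint bookkeeping and the remaining cases that the paper dispatches with ``the other cases are similar.''
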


\begin{proof}
  We have $\gamma > 0$ if and only if $cz + d$ takes values in the upper half-plane or negative real axis, and $\gamma < 0$ if and only if $cz+d$ takes values in the lower half-plane or positive real axis. Thus $j(\gamma,z)^{\frac12}$ has argument in $(0,\frac{\pi}{2}]$ if $\gamma > 0$, and argument in $(-\frac{\pi}{2}, 0]$ if $\gamma < 0$. 

  Then, for example, if $\gamma,\gamma'> 0$ the argument of $j(\gamma,z)^{\frac 12} j(\gamma',\gamma z)^{\frac 12}$ lies in $(0,\pi]$;\ so $j(\gamma,z)^{\frac 12} j(\gamma',\gamma z)^{\frac 12}$ will fail to equal $j(\gamma'\gamma ,z)^{\frac 12}$ precisely when the latter has argument in $(-\frac{\pi}{2},0]$, i.e., when $\gamma\gamma' < 0$. The other cases are similar.
\end{proof}

\begin{caution}
  For the rest of this section $\varepsilon$ denotes the function defined in the statement of Theorem~\ref{thm:gamma0-4-thm}, which will be proved to be the multiplier system of $\eta$ once we check that it satisfies \eqref{eq:multip}.
\end{caution}

\begin{lemma}
  The multiplicative relation \eqref{eq:multip} holds if $\gamma' = -I$ or $T$.
\end{lemma}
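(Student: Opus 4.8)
The plan is to verify \eqref{eq:multip} separately in the two cases, exploiting that left multiplication by $-I$ or by $T$ has a transparent effect on all the relevant data. First I would dispose of $\gamma' = -I$. Here $\varepsilon(-I) = -i$, and one computes $E(-I) = E(-1,0,0,-1)$ directly; since $(-I)\gamma = \sm{-a}{-b}{-c}{-d}$, I would compare $E(-a,-b,-c,-d)$ with $E(a,b,c,d)$ using the definition $E(a,b,c,d) = ac(1-d^2) + d(b-c+3) - 3$: the $ac(1-d^2)$ term is unchanged, while $d(b-c+3)$ becomes $(-d)(-b+c+3)$, so the difference $E((-I)\gamma) - E(\gamma)$ is a simple explicit expression in $a,b,c,d$ whose class mod $24$ I can read off (it should work out to account for the factor $e^{\pi i E(-I)/12} = -i$ up to the sign $\sigma$). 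Simultaneously, $\leg{-c}{-d}$ versus $\leg{c}{d}$ is controlled by Proposition~\ref{prop:allow-negative} in the appendix (the behavior of the extended Jacobi symbol under negating numerator and denominator). Finally $\sigma(-I,\gamma)$ is given by Lemma~\ref{lem:sign-cocycle}: since $-I > 0$, we get $\sigma(-I,\gamma) = -1$ exactly when $\gamma > 0$ and $(-I)\gamma < 0$, or $\gamma < 0$ and $(-I)\gamma > 0$ — but $(-I)\gamma$ has lower-left entry $-c$, so this toggles in a way I can match against the sign changes coming from the Jacobi symbol and the exponential.

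Next I would handle $\gamma' = T = \sm{1}{1}{0}{1}$, which is the more substantial of the two. Here $T\gamma = \sm{a+c}{b+d}{c}{d}$, so $c$ and $d$ are unchanged — hence $\leg{c}{d}$ is literally the same on both sides, and moreover $\sigma(T,\gamma) = 1$ always by Lemma~\ref{lem:sign-cocycle} (since $T < 0$, and $T\gamma$ has the same lower-left entry $c$ and the same $d$ as $\gamma$, so $T\gamma$ and $\gamma$ have the same sign; the only way to get $\sigma = -1$ would require $\gamma,T\gamma$ to have opposite signs). Thus \eqref{eq:multip} reduces to the purely arithmetic identity $E(a+c, b+d, c, d) \equiv E(a,b,c,d) + 1 \pmod{24}$, matching $\varepsilon(T) = e^{\pi i/12}$. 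I would verify this by direct expansion: $E(a+c,b+d,c,d) = (a+c)c(1-d^2) + d(b+d-c+3) - 3$, and subtracting $E(a,b,c,d)$ leaves $c^2(1-d^2) + d^2$. Since $\gamma \in \Gamma_0(4)$ forces $c$ even, $c^2(1-d^2) \equiv 0 \pmod{4}$ always, and modulo $3$ one has $c^2(1-d^2) + d^2 \equiv c^2 - c^2 d^2 + d^2$; using $ad - bc = 1$ so $ad \equiv 1 \pmod c$ doesn't immediately help, so instead I'd argue directly mod $8$ and mod $3$ that $c^2(1-d^2) + d^2 \equiv 1$, i.e. $c^2(1-d^2) \equiv 1 - d^2 \pmod{24}$, equivalently $(c^2-1)(1-d^2) \equiv 0 \pmod{24}$ — which holds because $d$ is odd (so $24 \mid d^2 - 1$) whenever $\gcd(c,d)=1$ forces... wait, $d$ need not be odd; but if $d$ is even then $c$ is odd, and $24 \mid c^2 - 1$. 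Either way one of the two factors is divisible by $24$, so the congruence holds.

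The main obstacle I anticipate is the bookkeeping in the $-I$ case: keeping the sign contributions from three independent sources — the change in $E$ mod $24$, the change in the extended Jacobi symbol $\leg{c}{d}$ under $(c,d) \mapsto (-c,-d)$ (which involves the somewhat delicate sign conventions for negative arguments, including the convention $\leg{0}{-1} = 1$), and the cocycle sign $\sigma(-I,\gamma)$ from Lemma~\ref{lem:sign-cocycle} — all in agreement, and in particular handling the degenerate sub-case $c = 0$ (where $\gamma = \pm T^n$ and one must check things by hand, since the Jacobi symbol convention $\leg{0}{\pm 1}$ and the sign definitions for $c=0$ both play a role). The $T$ case, by contrast, should be a clean one-line congruence once the observation that $c, d, \leg{c}{d}, \sigma$ are all unaffected is in place. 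I would organize the write-up to treat $\gamma' = -I$ first with explicit attention to $c = 0$, then dispatch $\gamma' = T$ quickly.
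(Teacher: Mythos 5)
Your overall strategy is the same as the paper's: verify \eqref{eq:multip} by direct computation, comparing the change in $E$ modulo $24$, the change in the Jacobi symbol, and the cocycle sign from Lemma~\ref{lem:sign-cocycle}. The reduction of the $T$ case to $E(a+c,b+d,c,d)-E(a,b,c,d)=c^2(1-d^2)+d^2$ and the observation that $\sigma(T,\gamma)=1$ both match the paper exactly. However, your final divisibility argument in the $T$ case is wrong. You need $(c^2-1)(1-d^2)\equiv 0 \pmod{24}$, and you claim this holds because \emph{one} of the two factors is divisible by $24$ (asserting $24 \mid d^2-1$ for $d$ odd, or $24\mid c^2-1$ for $c$ odd). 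That assertion is false: for $d=3$ one has $d^2-1=8$, which is not divisible by $24$. A concrete counterexample to your reasoning is $\gamma=\sm{3}{2}{4}{3}\in\Gamma_0(4)$, where $c^2-1=15$ and $1-d^2=-8$; neither factor is divisible by $24$, though the product $-120$ is. The correct argument splits the modulus between the two factors: since $c\equiv 0\pmod 4$ forces $d$ odd, $8\mid 1-d^2$; and since $\gcd(c,d)=1$, at least one of $c,d$ is prime to $3$, so $3$ divides at least one of $c^2-1$, $1-d^2$. (Your parenthetical worry that ``$d$ need not be odd'' is also moot: in $\Gamma_0(4)$ one has $ad\equiv 1\pmod 4$, so $d$ is always odd.)

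The $\gamma'=-I$ case in your write-up is only a plan (``it should work out''), not a verification, so it cannot be assessed as complete. The paper carries it out by computing $E(-a,-b,-c,-d)=E(a,b,c,d)-6d$, reducing the target to $\varepsilon(-\gamma)=i\,\sgn(\gamma)\varepsilon(\gamma)$ via $\sigma(-I,\gamma)=-\sgn(\gamma)$, and then evaluating $\leg{-c}{-d}\leg{c}{d}=(-1)^{(d+1)/2}\sgn(c)$ for $c\neq 0$, with the degenerate case $c=0$, $d=\pm 1$ checked by hand. You correctly anticipate that this is where the delicate sign bookkeeping lives and that $c=0$ needs separate treatment, but the computation still has to be done to constitute a proof.
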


\begin{proof}
First consider $\gamma' = -I$.  Since $\gamma' > 0$, and since $\gamma,-\gamma$ have opposite signs, we see $\sigma(-I,\gamma) = -\sgn(\gamma)$. Thus the target \eqref{eq:multip} becomes 
\[ \varepsilon(-\gamma) = i \sgn(\gamma) \varepsilon(\gamma) . \]
We have $E(-a,-b,-c,-d) = E(a,b,c,d) - 6d$, and therefore
\[  \varepsilon(-\gamma) =   \tleg{-c}{-d} e^{\pi i E(-a,-b,-c,-d) / 12 } = \tleg{-c}{-d} \tleg{c}{d} e^{- \pi  i d/2 } \varepsilon(\gamma).\]

If $c\neq 0$ we have $\sgn(\gamma) = \sgn(c)$ and 
\[ \tleg{-c}{-d}\tleg{c}{d} = (-1)^{\frac{-d-1}{2}} \tleg{c}{d} \tleg{c}{-d} = (-1)^{\frac{-d-1}{2}} \sgn(c)  = (-1)^{\frac{d+1}{2}} \sgn(\gamma).\]
Then 
\[  \varepsilon(-\gamma) = e^{\pi i (d+1)/2 } e^{-\pi i d/2} \sgn(\gamma) \varepsilon(\gamma) = i \sgn(\gamma) \varepsilon(\gamma) \] as desired. If instead $c = 0$ and $d = \pm 1$ then $\leg{-c}{-d}\leg{c}{d} = 1$, $e^{-\pi i d/2 } = - i \sgn(d)$ and $\sgn(\gamma) = -\sgn(d)$, and again the result follows.

Now suppose $\gamma' = T$. If $\gamma = \sm{a}{b}{c}{d}$ then $T \gamma = \sm{a+c}{b+d}{c}{d}$. Since $\sgn(\gamma) = \sgn(T\gamma)$, we have $\sigma(T,\gamma) = 1$ in all cases, and the target relation is
\[ \varepsilon(T\gamma) = e^{\pi i/12} \varepsilon(\gamma). \]
We have
\[ E(a+c,b+d,c,d) - E(a,b,c,d) =  c^2(1-d^2) + d^2 \]
and therefore
\begin{align*}
  \varepsilon(T\gamma) & = \leg{c}{d} e^{\pi i E(a+c,b+d,c,d) / 12 }\\
  & = \varepsilon(\gamma) \cdot e^{\pi i (c^2(1-d^2) + d^2)/12} .   
\end{align*}
But \[ c^2(1-d^2) + d^2 = (c^2-1)(1-d^2) + 1 \equiv 1 \pmod{24} \]
because $d$ is odd, so $1-d^2\equiv 0 \pmod 8$;\ and at least one of $c,d$ is not divisible by $3$, so $(c^2-1)(1-d^2)$ is divisible by $3$. Therefore $\varepsilon(T\gamma) = \varepsilon(\gamma) e^{\pi i /12}$ as desired.
\end{proof}

It remains to check the relation \eqref{eq:multip} for $\gamma' = \bar{T}^4$. We have
\[ \lm{1}{0}{4}{1} \lm{a}{b}{c}{d} = \lm{a}{b}{c+4a}{d+4b} =: \lm{a}{b}{c'}{d'}.\]
We begin with the following two observations.

\begin{lem}\label{lem:jac-sym-trick}
  If $a > 0$ then we have $\leg{c'}{d'} = \leg{c}{d}$.
  \end{lem}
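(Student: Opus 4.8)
The engine of the proof is the single algebraic identity
\[ c'd - cd' = (c+4a)d - c(d+4b) = 4(ad-bc) = 4, \]
valid for any $\gamma = \sm{a}{b}{c}{d} \in \Gamma_0(4)$. I would begin by collecting the relevant coprimalities: $\gcd(c,d) = \gcd(c',d') = 1$ from the determinant relations $ad-bc = ad'-bc' = 1$, and also $\gcd(d,d') = 1$, since any common divisor of $d$ and $d'$ is odd, divides $d'-d = 4b$ (hence $b$), and therefore divides $ad-bc=1$. Together with the congruences $c' \equiv c \equiv 0 \pmod 4$ and $d' \equiv d \pmod 4$ (both immediate), and the properties of the extended Jacobi symbol recorded in Proposition~\ref{prop:allow-negative}---multiplicativity in the numerator (valid even for negative denominators), periodicity, and quadratic reciprocity---this is all the input I expect to need.

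The heart of the matter is the case $d, d' > 0$, which I would treat first. Reducing the engine identity modulo $d'$ gives $c'd \equiv 4 \pmod{d'}$, so by numerator-multiplicativity and periodicity of the Jacobi symbol in its numerator (legitimate since $d'>0$),
\[ \leg{c'}{d'}\leg{d}{d'} = \leg{c'd}{d'} = \leg{4}{d'} = 1; \]
reducing modulo $d$ gives $cd' \equiv -4 \pmod d$, so likewise
\[ \leg{c}{d}\leg{d'}{d} = \leg{-4}{d} = \leg{-1}{d}. \]
Multiplying the two displays and using $\leg{d'}{d}^2 = 1$ yields $\leg{c'}{d'}\leg{c}{d} = \leg{-1}{d}\,\leg{d}{d'}\leg{d'}{d}$. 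By quadratic reciprocity $\leg{d}{d'}\leg{d'}{d} = (-1)^{\frac{d-1}{2}\frac{d'-1}{2}}$, and since $d \equiv d' \pmod 4$ the integers $\frac{d-1}{2}$ and $\frac{d'-1}{2}$ have the same parity, so this sign equals $(-1)^{\frac{d-1}{2}} = \leg{-1}{d}$. Hence $\leg{c'}{d'}\leg{c}{d} = \leg{-1}{d}^2 = 1$, which is the claim. Note that $a>0$ plays no role in this case.

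It remains to handle the cases where $d$ or $d'$ is negative, and this is exactly where the hypothesis $a > 0$ is needed---and it is genuinely necessary, as $\gamma = -I$ (with $a=-1$, $c=0$, $d=-1$, so $c'=-4$, $d'=-1$) already violates the conclusion. I would first dispose of $c = 0$ separately: then $ad=1$ and $a>0$ force $a=d=1$, and the identity is checked by inspection. For $c \neq 0$, the plan is to rerun the computation above while tracking the factor $\sgn(\,\cdot\,)$ that the extended symbol carries when the denominator is negative: equating $\leg{c'd}{d'}$ with $\leg{4}{d'}$ now costs a factor $\sgn(c'd)$, equating $\leg{cd'}{d}$ with $\leg{-4}{d}$ costs a factor $\sgn(cd')$, and the reciprocity law for negative arguments in Proposition~\ref{prop:allow-negative} contributes its own correction signs. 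The assertion is that under $a>0$ these accumulated signs cancel in each remaining sign pattern of $(d,d')$. I expect this sign bookkeeping to be the only real obstacle; it should reduce to a small finite verification once one uses that $a>0$ together with $ad-bc=1$ restricts the possible sign patterns (e.g. $c' = c+4a > c$) and that all symbols in sight take values in $\{\pm 1\}$.
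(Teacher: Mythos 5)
Your identity $c'd - cd' = 4$ is a genuinely different engine from the paper's: the paper never compares $d$ with $d'$ directly, but instead uses $bc = ad-1$ to rewrite $\leg{c}{d} = \leg{ad-1}{d}\leg{b}{d} = \leg{-b}{d}$ (and likewise $\leg{c'}{d'} = \leg{-b}{d'}$), after which the lemma is a single application of the periodicity $\leg{-b}{d+4b} = \leg{-b}{d}$ from Proposition~\ref{prop:allow-negative}(7) --- a property that holds for negative denominators with no extra signs, so the only place $a>0$ enters is the one-line observation that $ad-1<0$ when $d<0$, which legitimizes $\leg{ad-1}{d}=\leg{-1}{d}$ via $(6)'$. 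Your route instead runs through quadratic reciprocity between $d$ and $d'$; your treatment of the case $d,d'>0$ is complete and correct (the coprimality of $d$ and $d'$, the congruences $c'd\equiv 4 \pmod{d'}$ and $cd'\equiv -4\pmod d$, and the parity argument $\frac{d-1}{2}\equiv\frac{d'-1}{2}\pmod 2$ are all right), and your identification of $-I$ as showing the necessity of $a>0$ is a nice touch.

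However, as written there is a real gap: the cases with $d<0$ or $d'<0$ --- which are exactly the cases where the hypothesis $a>0$ does any work, and hence where the nontrivial content of the lemma lives --- are reduced to the unverified assertion that ``these accumulated signs cancel.'' You should carry this out. It does close, but only after using $a>0$ concretely to pin down signs: e.g.\ if $d<0<d'$ then $b>0$ and $bc=ad-1<0$ forces $c<0$, so $cd'<0$ has the sign of $-4$ and $(6)'$ applies cleanly, while reciprocity in the form $(5)'$ is still valid because $d'>0$; symmetrically for $d>0>d'$ one needs $c'>0$, which follows from $bc'=ad'-1<0$ and $b<0$. In the case $d,d'<0$ both the reciprocity law (which picks up the extra $-1$ of $(5)'$) and both periodicity steps contribute signs, and the computation lands on $\leg{c'}{d'}\leg{c}{d}=\sgn(c)\sgn(c')$; this equals $1$ because $bc$ and $bc'$ are both negative, so $c$ and $c'$ share the sign $-\sgn(b)$. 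One must also dispose of $c=0$ and $c'=0$ separately (where $a>0$ forces $d=1$, resp.\ $d'=1$) before invoking multiplicativity and $(6)'$, since $(1)'$ and $(6)'$ have exceptional cases at zero numerators. Until this bookkeeping is written down, the proof covers only the case in which, by your own observation, the hypothesis is not needed.
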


\begin{proof} First suppose $c = 0$. Then $\gamma = \sm{1}{b}{0}{1}$ and $\bar{T}^4 \gamma = \sm{a}{b}{c'}{d'} = \sm{1}{b}{4}{4b+1}$, and we see directly that $\leg{c}{d} = \leg{c'}{d'} = 1$. If instead $c' = 0$ then $\gamma = \sm{1}{b}{-4}{-4b+1}$  and $\bar{T}^4 \gamma = \sm{1}{b}{0}{1}$, and again  $\leg{c}{d} = \leg{c'}{d'} = 1$. Finally if $b = 0$ then $a = d = d' = 1$ and the claim is trivial.

Now suppose $c,c' \neq 0$ and $b \neq 0$.  Since $(b,d) = 1$ we may write
  \[  \tleg{c}{d} = \tleg{bc}{d}\tleg{b}{d} = \tleg{ad-1}{d} \tleg{b}{d}.\]
 If $d < 0$ then $ad-1 < 0$ since we have assumed $a > 0$. Therefore irrespective of the sign of $d$ we have  $\leg{ad-1}{d} = \leg{-1}{d}$ by Property $(6)'$ of Proposition~\ref{prop:allow-negative}, and
  \[  \tleg{c}{d} =  \tleg{-1}{d} \tleg{b}{d} =  \tleg{-b}{d}.\]
The identical argument shows that $\leg{c'}{d'} = \leg{-b}{d'}$. But $d' = d + 4b$,
and $\leg{-b}{d+4b} = \leg{-b}{d}$ by Property (7) of Proposition~\ref{prop:allow-negative}. The claim follows.
\end{proof}

\begin{exercise}\label{ex:gamma-2-sub-1}
  Lemma~\ref{lem:jac-sym-trick} uses that we are working in $\Gamma_0(4)$ rather than $\Gamma_0(2)$. Suppose instead that $\sm{a}{b}{c'}{d'} = \sm{1}{0}{2}{1}\sm{a}{b}{c}{d}$ with $a > 0$. Prove that $\leg{c'}{d'} = \leg{c}{d}$ if $b \equiv 0,3 \pmod{4}$ and $\leg{c'}{d'} = -\leg{c}{d}$ if $b \equiv 1,2 \pmod{4}$.
\end{exercise}

\begin{lem}\label{lem:sign-swap}
  If the relation \eqref{eq:multip} holds for a pair $\gamma',\gamma$ then it also holds for the pair $\gamma', -\gamma$.
\end{lem}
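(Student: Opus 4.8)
The plan is to exploit that $-I$ is central, so that $-\gamma = (-I)\gamma$ and hence $\gamma'(-\gamma) = (-I)(\gamma'\gamma) = -(\gamma'\gamma)$; this lets me re-express every term of the target relation for the pair $\gamma',-\gamma$ in terms of quantities attached to the pair $\gamma',\gamma$. First I would record the special case of \eqref{eq:multip} already established for $\gamma' = -I$: for every $\delta \in \SL_2(\Z)$ we have $\varepsilon(-\delta) = \varepsilon(-I)\,\varepsilon(\delta)\,\sigma(-I,\delta)$, and since $-I > 0$ while $-\delta$ has the opposite sign to $\delta$, Lemma~\ref{lem:sign-cocycle} gives $\sigma(-I,\delta) = -\sgn(\delta)$; together with $\varepsilon(-I) = -i$ this yields the clean formula $\varepsilon(-\delta) = i\,\sgn(\delta)\,\varepsilon(\delta)$ for all $\delta$.

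Next I would substitute. The target relation for $\gamma',-\gamma$,
\[ \varepsilon(\gamma'(-\gamma)) = \varepsilon(\gamma')\,\varepsilon(-\gamma)\,\sigma(\gamma',-\gamma), \]
has left-hand side $\varepsilon(-(\gamma'\gamma)) = i\,\sgn(\gamma'\gamma)\,\varepsilon(\gamma'\gamma)$ and right-hand side $i\,\sgn(\gamma)\,\sigma(\gamma',-\gamma)\cdot\varepsilon(\gamma')\varepsilon(\gamma)$. Using the hypothesis $\varepsilon(\gamma')\varepsilon(\gamma) = \varepsilon(\gamma'\gamma)\,\sigma(\gamma',\gamma)$ (legitimate because $\sigma = \pm 1$) and cancelling $i\,\varepsilon(\gamma'\gamma)$, the lemma is reduced to the purely combinatorial sign identity
\[ \sigma(\gamma',\gamma)\,\sigma(\gamma',-\gamma) = \sgn(\gamma)\,\sgn(\gamma'\gamma). \]

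Finally I would verify this identity. The quick route is a finite case check against Lemma~\ref{lem:sign-cocycle}, running over the possible signs of $\gamma'$, $\gamma$, and $\gamma'\gamma$ (and using $\sgn(-\gamma) = -\sgn(\gamma)$ and $\sgn(\gamma'(-\gamma)) = -\sgn(\gamma'\gamma)$); in each case both sides agree. Alternatively, and perhaps more cleanly, one can invoke that $\sigma$ is a genuine $\{\pm 1\}$-valued $2$-cocycle, coming from associativity of the weight-$\frac12$ slash operator (cf.\ \eqref{eq:slash}): applied to the triple $(\gamma',\gamma,-I)$ and using $\gamma(-I) = -\gamma$ and $(\gamma'\gamma)(-I) = -(\gamma'\gamma)$, the cocycle identity reads $\sigma(\gamma'\gamma,-I)\,\sigma(\gamma',\gamma) = \sigma(\gamma',-\gamma)\,\sigma(\gamma,-I)$, and since $\sigma(\delta,-I) = -\sgn(\delta)$ by Lemma~\ref{lem:sign-cocycle} and $\sigma^2 \equiv 1$, this is exactly the desired identity. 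There is no substantive obstacle here; the only thing to watch is the sign bookkeeping forced by the convention $j(-I,z)^{1/2} = i$ — which is precisely what makes $\sigma(-I,\delta) = -\sgn(\delta)$ rather than $+\sgn(\delta)$ — and, if one takes the cocycle route, getting the orientation of the cocycle identity right.
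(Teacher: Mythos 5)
Your argument is correct and follows essentially the same route as the paper: substitute $\varepsilon(-\delta) = i\,\sgn(\delta)\,\varepsilon(\delta)$ on both sides, cancel, and reduce to the sign identity $\sigma(\gamma',\gamma)\,\sigma(\gamma',-\gamma) = \sgn(\gamma)\,\sgn(\gamma'\gamma)$, verified by a finite case check against Lemma~\ref{lem:sign-cocycle}. Your alternative derivation of that identity from the $2$-cocycle relation for $j(\cdot,z)^{1/2}$ applied to the triple $(\gamma',\gamma,-I)$ is a valid and slightly slicker way to avoid the case check, but it does not change the substance of the proof.
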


\begin{proof}
 Assume \eqref{eq:multip} holds for the pair $\gamma',\gamma$;\ that is, assume 
 \[ \varepsilon(\gamma' \gamma) = \varepsilon(\gamma') \varepsilon(\gamma) \sigma(\gamma', \gamma). \]
 We want to prove that
 \[ \varepsilon(\gamma' (-\gamma)) = \varepsilon(\gamma') \varepsilon(-\gamma) \sigma(\gamma', -\gamma). \]
 Substituting $\varepsilon(-\gamma) = i \sgn(\gamma) \varepsilon(\gamma)$ and $\varepsilon(-\gamma' \gamma) = i \sgn(\gamma' \gamma)\varepsilon(\gamma'\gamma)$ and comparing the equations, we are reduced to showing that
 \[ \sigma(\gamma', -\gamma) \sigma(\gamma',\gamma) = \sgn(\gamma) \sgn(\gamma' \gamma) .\]
Both sides can be checked to be $1$ if $\gamma, \gamma'\gamma$ have the same sign and $-1$ if they have the opposite sign.
\end{proof}

Thanks to Lemma~\ref{lem:sign-swap} we are reduced to showing the following.

\begin{prop}
  \label{prop:the-last-hurrah}
  The relation \eqref{eq:multip} holds for $\gamma' = \bar{T}^4$ and $\gamma = \sm{a}{b}{c}{d}$ with $a > 0$.
\end{prop}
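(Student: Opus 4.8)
The plan is to verify the multiplicative relation $\varepsilon(\bar T^4 \gamma) = \varepsilon(\bar T^4)\,\varepsilon(\gamma)\,\sigma(\bar T^4,\gamma)$ by direct computation, using the explicit formula $\varepsilon(\sm{a}{b}{c}{d}) = \leg{c}{d} e^{\pi i E(a,b,c,d)/12}$ from Theorem~\ref{thm:gamma0-4-thm} together with the value $\varepsilon(\bar T^4) = e^{-\pi i/3}$. Write $\bar T^4\gamma = \sm{a}{b}{c'}{d'}$ with $c' = c+4a$ and $d' = d+4b$. The Jacobi-symbol part is already handled: since $a>0$, Lemma~\ref{lem:jac-sym-trick} gives $\leg{c'}{d'} = \leg{c}{d}$, so the entire content of the proposition reduces to an identity between roots of unity, namely
\[ e^{\pi i E(a,b,c',d')/12} = e^{-\pi i/3}\, e^{\pi i E(a,b,c,d)/12}\, \sigma(\bar T^4,\gamma). \]
Equivalently, I must show
\[ E(a,b,c+4a,\,d+4b) - E(a,b,c,d) + 4 \equiv \begin{cases} 0 \pmod{24} & \text{if } \sigma(\bar T^4,\gamma) = 1,\\ 12 \pmod{24} & \text{if } \sigma(\bar T^4,\gamma) = -1.\end{cases} \]

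The first step is to expand the difference $E(a,b,c',d') - E(a,b,c,d)$ explicitly as a polynomial in $a,b,c,d$, using $ad-bc=1$ to eliminate terms. From $E(a,b,c,d) = ac(1-d^2) + d(b-c+3) - 3$, substituting $c \mapsto c+4a$, $d \mapsto d+4b$ and simplifying with the determinant relation should produce something whose residue mod $24$ I can control using the standard facts that $d$ is odd (so $1-d^2 \equiv 0 \pmod 8$) and that $3$ divides an appropriate product since not all of $a,b,c,d$ are divisible by $3$; these are exactly the kinds of reductions already used in the $\gamma' = T$ case. I expect the outcome to be that $E(a,b,c',d') - E(a,b,c,d) + 4 \equiv 12 b \cdot(\text{something})$ or similar, so that its class mod $24$ is governed by the parity of a simple expression in the entries.

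The second step is to match that parity against $\sigma(\bar T^4,\gamma)$ as computed from Lemma~\ref{lem:sign-cocycle}. Here I use that $\bar T^4 = \sm{1}{0}{4}{1} > 0$ (since its lower-left entry is positive), so by Lemma~\ref{lem:sign-cocycle}, $\sigma(\bar T^4,\gamma) = -1$ exactly when $\gamma > 0$ but $\bar T^4\gamma < 0$, and in every other case $\sigma(\bar T^4,\gamma) = 1$ (the case $\gamma < 0$, $\bar T^4\gamma > 0$ cannot arise here because if $c < 0$ or $c=0,d>0$... — actually this needs care). Concretely: $\gamma > 0$ means $c > 0$ or ($c=0$, $d<0$); $\bar T^4\gamma = \sm{a}{b}{c+4a}{d+4b}$, and with $a > 0$ the sign of $c+4a$ relative to $c$ can change only when $c$ is negative, so the sign flip $\sigma = -1$ occurs in a narrow, explicitly describable band of cases. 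I would enumerate these few sub-cases ($c = 0$; $c'=0$; $b = 0$; and $c, c' \neq 0$, $b\neq 0$ with $c, c'$ of various signs) in parallel with the structure of the proof of Lemma~\ref{lem:jac-sym-trick}, and in each check that the mod-$24$ computation of the first step agrees with the sign from Lemma~\ref{lem:sign-cocycle}.

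\textbf{Main obstacle.} The arithmetic of the polynomial identity modulo $24$ is routine, so the real friction will be the bookkeeping of signs: correctly determining $\sigma(\bar T^4,\gamma)$ across the degenerate cases ($c = 0$, $c' = 0$, $b = 0$) where the definition of $\sgn(\gamma)$ depends delicately on the sign of $d$ rather than $c$, and making sure the exceptional conventions $\leg{0}{\pm 1} = 1$ and $I < 0$, $-I > 0$ are threaded through consistently. I expect the cleanest route is to first dispatch the degenerate cases by hand (as was done inside the proof of Lemma~\ref{lem:jac-sym-trick}), and then treat the generic case $c,c'\neq 0$, $b\neq 0$ uniformly, where $\sgn(\gamma) = \sgn(c)$ and $\sgn(\bar T^4\gamma) = \sgn(c+4a) = \sgn(c')$, reducing the sign comparison to the question of whether $c$ and $c+4a$ have the same sign — which, since $a > 0$, happens unless $-4a < c < 0$.
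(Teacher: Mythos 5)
Your overall strategy---cancel the Jacobi symbols via Lemma~\ref{lem:jac-sym-trick} and reduce \eqref{eq:multip} to a congruence for $E(a,b,c',d')-E(a,b,c,d)$ modulo $24$, checked separately mod $8$ and mod $3$---is exactly the paper's route. The genuine gap is the step you yourself flag as the main obstacle: you never actually determine $\sigma(\bar T^4,\gamma)$, you set up a two-branch target depending on its value, and you plan an enumeration of sub-cases over the signs of $c$, $c'$, $b$. None of that is needed, and leaving it unresolved is what keeps this from being a proof. Since $\bar T^4>0$, Lemma~\ref{lem:sign-cocycle} can only produce $\sigma(\bar T^4,\gamma)=-1$ through its case (i), which requires $\gamma>0$, i.e.\ $c>0$ or $c=0$; in either case $c\ge 0$, so $c'=c+4a>0$ because $a>0$, whence $\bar T^4\gamma>0$ and case (i) never fires. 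Thus $\sigma(\bar T^4,\gamma)=1$ unconditionally. The ``narrow band'' $-4a<c<0$ where $c$ and $c'$ have opposite signs is a red herring: there $\gamma<0$, so case (i) is inapplicable anyway, and your parenthetical worry about the configuration $\gamma<0$, $\bar T^4\gamma>0$ is likewise moot, since that is case (ii) of Lemma~\ref{lem:sign-cocycle} and needs $\gamma'<0$.

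With $\sigma\equiv 1$ the target collapses to the single congruence $E(a,b,c+4a,d+4b)-E(a,b,c,d)\equiv -4\pmod{24}$, and your anticipated answer of the shape ``$12b\cdot(\text{something})$ governed by a parity'' does not materialize: the difference is constant mod $24$, with no residual case split. The arithmetic itself goes as you predict in outline: mod $8$ the terms $ac(1-d^2)$ and $ac'(1-d'^2)$ vanish since $d,d'$ are odd, and after using $4-4ad=-4bc$ the requirement becomes $4b(b-2c-4a+3)\equiv 0\pmod 8$, which holds because one of $b$ and $b+3$ is even; mod $3$, substituting $ad=bc+1$ reduces the difference plus $4$ to $-(a^2-1)(b^2-1)$, which vanishes because $3$ cannot divide both $a$ and $b$ (else $3\mid ad-bc=1$). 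So the approach is the right one and matches the paper's, but as written the proposal is incomplete: the one-line sign argument above must replace the deferred case analysis, and the mod-$24$ computation must actually be carried out rather than asserted.
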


\begin{proof}  Since $a > 0$, if $c \ge 0$ then $c' = c + 4a > 0$. Therefore we cannot have $\gamma > 0$ and $\bar{T}^4 \gamma < 0$. It follows that $\sigma(\bar{T}^4,\gamma) = 1$ and we have to prove that
  \[ \varepsilon(\bar{T}^4 \gamma) = \varepsilon(\bar{T}^4)\varepsilon(\gamma)  = e^{-\pi i /3} \varepsilon(\gamma).\]




By Lemma~\ref{lem:jac-sym-trick} we have $\leg{c'}{d'} = \leg{c}{d}$ and so our problem is to show that
\[ E(a,b,c',d') - E(a,b,c,d) \equiv - 4\pmod{24} .\]
We analyze this congruence separately mod $8$ and mod $3$.  Since $1 - d^2 \equiv 1 - (d')^2 \equiv 0 \pmod{8}$ the mod $8$ congruence becomes
\[ (d + 4b)(b-c -4a +3) - d(b-c+3)\equiv   - 4 \pmod{8}.\]
Since $4 - 4ad = -4bc$ this is equivalent to 
\[ 4b(b-2c-4a+3) \equiv 0 \pmod{8}. \]
But either $b$ or $b-2c-4a+3$ is even, and so this congruence holds.  For the mod $3$ congruence, evaluating $E(a,b,c',d') - E(a,b,c,d)$ and replacing all instances of $ad$ with $bc+1$ gives 
\[  E(a,b,c',d') - E(a,b,c,d) \equiv -(a^2-1)(b^2-1) -1 \pmod{3}. \]
Since at least one of $a,b$ is not divisible by $3$ the congruence holds.
\end{proof}

\begin{exercise}\label{ex:gamma-2-final}
To prove Theorem~\ref{thm:gamma0-4-thm} with $\Gamma_0(2)$ in place of $\Gamma_0(4)$ it suffices to show that the relation \eqref{eq:multip} holds for $\gamma' = \bar{T}^2$ and $\gamma = \sm{a}{b}{c}{d}$ with $a > 0$. Just as in the previous proof we have $\sigma(\overline{T}^2,\gamma) = 1$. Set $c' = c+2a$, $d' = d + 2b$.
\begin{enumerate}
  \item Check that $\varepsilon(\bar{T}^2) = e^{-\pi i /6}$.
  \item Use Exercise~\ref{ex:gamma-2-sub-1} to reduce the relation $\varepsilon(\bar{T}^2 \gamma) = e^{-\pi i /6} \varepsilon(\gamma)$ to the congruence
  \[ E(a,b,c',d') - E(a,b,c,d) \equiv \begin{cases} -2 \pmod{24} & \text{if } b \equiv 0,3 \pmod{4}\\
 10 \pmod{24} & \text{if } b \equiv 1,2 \pmod{4} . \end{cases}\]
 \item Confirm the congruence of the previous part.
\end{enumerate}
\end{exercise}

\appendix

\section{The extended Jacobi symbol}\label{app:jacobi}

We collect here the Jacobi symbol identities used in the rest of the text, extended to negative ``denominators''.

\begin{defn} Let $d$ be an odd integer.
    If $d$ is positive, let $\leg{m}{d}$ denote the usual Jacobi  symbol. In particular, by convention $\leg{0}{1} = 1$. If $d$ is negative, we define $\leg{m}{d} = \sgn(m) \leg{m}{|d|}$ except that by convention we take $\leg{0}{-1} = 1$. This extension is the same as the \emph{Kronecker symbol} restricted to odd values of~$d$. 
\end{defn}

We recall that the usual Jacobi symbol has the following basic properties.

\begin{lemma}\label{lem:jac} Suppose $m,m' \in \Z$ and $d,d'$ are positive and odd. Then:
  \begin{enumerate}
    \item $\leg{mm'}{d} = \leg{m}{d}\leg{m'}{d}$.

    \item $\leg{m}{dd'} = \leg{m}{d}\leg{m}{d'}$.

    \item $\leg{-1}{d} = (-1)^{\frac{d-1}{2}}$.

    \item $\leg{2}{d} = (-1)^{\frac{d^2-1}{8}}$.

    \item $\leg{d}{d'}\leg{d'}{d} = (-1)^{\frac{d-1}{2} \frac{d'-1}{2}}$ provided $d,d'$ are coprime.

    \item If $m \equiv m' \pmod{d}$ then $\leg{m}{d} = \leg{m'}{d}$.

    \item If $d \equiv d' \pmod{4m}$ then $\leg{m}{d} = \leg{m}{d'}$.
  \end{enumerate}
\end{lemma}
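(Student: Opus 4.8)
The plan is to derive every item from the single definition of the Jacobi symbol as a product of Legendre symbols, $\leg{m}{d} = \prod_i \leg{m}{p_i}$ when $d = p_1\cdots p_r$ is a factorization into (not necessarily distinct) odd primes, together with the classical facts for the Legendre symbol: that $\leg{\cdot}{p}$ is induced by the unique surjection $(\Z/p)^\times \onto \{\pm 1\}$, the two supplementary laws, and Gauss's quadratic reciprocity for odd primes — all of which I would simply cite. Under this definition $\leg{m}{1}=1$ is the empty product and $\leg{m}{d}=0$ exactly when $\gcd(m,d)>1$; it is worth keeping the latter in mind throughout, since the degenerate cases where symbols vanish have to be checked separately each time.

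First I would dispose of (1), (2), (6). Property (2) is a trivial regrouping of the defining product. For (1) and (6), both sides factor through the primes dividing $d$ by (2) (and $m\equiv m'\pmod d$ forces $m\equiv m'\pmod{p_i}$), so it suffices to treat a single prime $d=p$; there they follow from the homomorphism property of the Legendre symbol, with the cases $p\mid mm'$ (resp.\ $p\mid m\equiv m'$) checked by hand.

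Next come the supplementary laws (3), (4) and reciprocity (5). The one genuinely useful auxiliary observation is that the exponents on the right-hand sides are themselves multiplicative in the denominator: for odd $d,d'$ one has $\frac{dd'-1}{2}\equiv\frac{d-1}{2}+\frac{d'-1}{2}\pmod 2$ (because $(d-1)(d'-1)\equiv 0\pmod 4$) and $\frac{(dd')^2-1}{8}\equiv\frac{d^2-1}{8}+\frac{(d')^2-1}{8}\pmod 2$ (because $(d^2-1)((d')^2-1)\equiv 0\pmod{64}$). Hence $(-1)^{(d-1)/2}$ and $(-1)^{(d^2-1)/8}$ are multiplicative in $d$, and since $\leg{-1}{d}$ and $\leg{2}{d}$ are multiplicative in $d$ by (2), properties (3) and (4) reduce to the prime case, which is the classical supplementary law. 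For (5), factor $d=\prod p_i$, $d'=\prod q_j$; multiplicativity in each argument turns $\leg{d}{d'}\leg{d'}{d}$ into $\prod_{i,j}\leg{p_i}{q_j}\leg{q_j}{p_i}=\prod_{i,j}(-1)^{\frac{p_i-1}{2}\frac{q_j-1}{2}}$ by prime reciprocity, and the total exponent is $\equiv\frac{d-1}{2}\frac{d'-1}{2}\pmod 2$ by iterating the first congruence.

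Finally (7), which I would derive from the items above and which I expect to be the only place demanding any real care. If $\gcd(m,d)>1$, a common prime factor $p$ of $m$ and $d$ divides $d'$ as well (because $p\mid 4m$), so both symbols vanish. Otherwise write $m=\pm 2^a u$ with $u>0$ odd, so that $\leg{m}{d}=\leg{\pm 1}{d}\,\leg{2}{d}^a\,\leg{u}{d}$ and likewise for $d'$. Since $4\mid 4m$ we have $d\equiv d'\pmod 4$, which settles the $\leg{\pm 1}{d}$ factor via (3); if $a\ge 1$ then $8\mid 4m$ gives $d\equiv d'\pmod 8$, settling $\leg{2}{d}^a$ via (4); and for $\leg{u}{d}$, flipping by (5) to $(-1)^{\frac{u-1}{2}\frac{d-1}{2}}\leg{d}{u}$ reduces matters to $d\equiv d'\pmod u$ (valid since $u\mid m\mid 4m$) together with $d\equiv d'\pmod 4$ once more for the sign. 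Multiplying the factors back together gives $\leg{m}{d}=\leg{m}{d'}$. Aside from this bookkeeping, the whole lemma is a formal consequence of prime quadratic reciprocity.
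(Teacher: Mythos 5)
Your proof is correct, but note that the paper does not actually prove this lemma at all: it is introduced with ``We recall that the usual Jacobi symbol has the following basic properties'' and is treated as a citation of standard facts, with the paper's own work reserved for Proposition~\ref{prop:allow-negative}, the extension to negative denominators. What you have written is therefore a genuine supplement rather than an alternative to the paper's argument. Your route --- define $\leg{m}{d}$ as a product of Legendre symbols over the prime factorization of $d$, observe that the exponents $\tfrac{d-1}{2}$ and $\tfrac{d^2-1}{8}$ are additive mod~$2$ in $d$ under multiplication of odd numbers, and bootstrap (3), (4), (5) from the prime case --- is the standard derivation of Jacobi reciprocity from Legendre reciprocity, and your treatment of (7) (splitting $m = \pm 2^a u$, handling the sign via (3) and $d \equiv d' \pmod 4$, the power of $2$ via (4) and $d \equiv d' \pmod 8$, and the odd part by flipping with (5) and invoking (6)) is exactly the right amount of care; this is in fact the one item of the lemma that is not purely formal, and the paper itself reuses the same flipping idea when it proves (7) for negative denominators in Proposition~\ref{prop:allow-negative}. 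The only loose ends are degenerate ones you already flag in spirit: $m = 0$ in (7) forces $d = d'$, and the coprimality of $u$ with both $d$ and $d'$ needed to apply (5) follows from $\gcd(m,d)=1$ together with $d \equiv d' \pmod{4m}$, which you use implicitly.
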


If we allow negative values of $d$ and $d'$, then we have the following.

\begin{prop}\label{prop:allow-negative}
  Suppose $m,m' \in \Z$ and $d,d'$ are odd. Properties (2)-(4) and (7) of Lemma~\ref{lem:jac} extend verbatim. The other properties change as follows.
\begin{enumerate}
    \item[$(1)'$] $\leg{mm'}{d} = \leg{m}{d}\leg{m'}{d}$ unless $d=-1$ and $m = 0$, $m' < 0$ or $m < 0$, $m'=0$.
    \item[$(5)'$] $\leg{d}{d'}\leg{d'}{d} = (-1)^{\frac{d-1}{2} \frac{d'-1}{2}}$ if and only if at least one of $d,d'$ is positive, provided $d,d'$ are coprime.

  \item[$(6)'$] Suppose $m \equiv m' \pmod{d}$. If $d > 0$ then $\leg{m}{d} = \leg{m'}{d}$. If $d<0$ and $m,m'$ have the same sign, then $\leg{m}{d} = \leg{m'}{d}$.
\end{enumerate}
\end{prop}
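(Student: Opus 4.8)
Granting Theorem~\ref{thm:eta-property} aside, here is how I would prove Proposition~\ref{prop:allow-negative}.

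The plan is to reduce all seven assertions to the corresponding facts about the ordinary Jacobi symbol recorded in Lemma~\ref{lem:jac}, by first establishing a single master formula: for every \emph{nonzero} integer $m$ and every odd integer $d$,
\[ \leg{m}{d} = \sgn(m)^{(d-1)/2}\leg{|m|}{|d|}, \]
the symbol on the right being the ordinary Jacobi symbol and the expression $\sgn(m)^{(d-1)/2}$ being well-defined because $\sgn(m) = \pm 1$ and $d-1$ is even. For $d > 0$ this follows by writing $m = \sgn(m)\,|m|$ and combining numerator-multiplicativity of the ordinary symbol with $\leg{-1}{d} = (-1)^{(d-1)/2}$; for $d < 0$ one applies the $d>0$ case to $\leg{m}{|d|}$ together with the definitional identity $\leg{m}{d} = \sgn(m)\leg{m}{|d|}$ and the elementary congruence $\frac{|d|+1}{2}\equiv\frac{d-1}{2}\pmod 2$. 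The only case this formula omits is $m = 0$, where $\leg{0}{d}$ equals $1$ for $d = \pm 1$ and $0$ otherwise; I would dispatch this case by hand, and it is exactly here that the convention $\leg{0}{-1} = 1$ produces the single exception in $(1)'$, upon comparing $\leg{0}{-1} = 1$ with $\leg{0}{-1}\leg{m'}{-1} = \sgn(m')$.

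Given the master formula, the verbatim extensions of $(1)'$, (2), (3), (4) are direct substitutions: for $(1)'$ with $m, m' \neq 0$ one uses $\sgn(mm') = \sgn(m)\sgn(m')$, and the case where $m$ or $m'$ vanishes is handled directly (yielding the stated exception); (2) uses the standard parity identity $\frac{dd'-1}{2}\equiv\frac{d-1}{2}+\frac{d'-1}{2}\pmod 2$; (3) is the identity $\sgn(-1)^{(d-1)/2} = (-1)^{(d-1)/2}$; and (4) follows from $\sgn(2) = 1$ and $|d|^2 = d^2$. For $(6)'$, when $d < 0$ and $m \equiv m' \pmod d$ with $m, m'$ of the same sign, one has $\sgn(m) = \sgn(m')$ and $|m| \equiv |m'| \pmod{|d|}$, so the master formula reduces the claim to Lemma~\ref{lem:jac}(6); the same computation shows that opposite signs would contribute a factor $\sgn(m)\sgn(m') = -1$, which is why the statement is restricted to equal signs. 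For $(5)'$ I would substitute the master formula into both $\leg{d}{d'}$ and $\leg{d'}{d}$ (note $d, d'$ are nonzero, being coprime), apply Lemma~\ref{lem:jac}(5) to the coprime positive integers $|d|$ and $|d'|$, and track the accumulated sign using $\frac{d-1}{2}\equiv\frac{|d|-1}{2}+\lambda(d)\pmod 2$, where $\lambda(d) = 0$ if $d > 0$ and $\lambda(d) = 1$ if $d < 0$. A short parity computation then yields $\leg{d}{d'}\leg{d'}{d} = (-1)^{\lambda(d)\lambda(d')}(-1)^{\frac{d-1}{2}\frac{d'-1}{2}}$, and $(-1)^{\lambda(d)\lambda(d')} = 1$ precisely when at least one of $d, d'$ is positive, which is exactly $(5)'$.

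I expect the verbatim extension of (7) to be the main obstacle. Via the master formula it reduces to showing $\leg{|m|}{|d|} = \leg{|m|}{|d'|}$ whenever $d \equiv d' \pmod{4m}$; the sign prefactor causes no trouble, since $4 \mid d - d'$ already forces $d \equiv d' \pmod 4$. When $d, d'$ have the same sign then $|d| \equiv |d'| \pmod{4|m|}$ and this is just Lemma~\ref{lem:jac}(7); the real content is the mixed-sign case, in which $|d| \equiv -|d'| \pmod{4|m|}$ and one must prove that the ordinary Jacobi symbol $\leg{|m|}{\cdot}$ is invariant under negation modulo $4|m|$ (i.e.\ $\leg{|m|}{n} = \leg{|m|}{n'}$ whenever $n, n'$ are positive odd with $n \equiv -n' \pmod{4|m|}$). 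Factoring $|m|$ into prime powers handles this: $\leg{2}{n} = (-1)^{(n^2-1)/8}$ is manifestly even and of period $8 \mid 4|m|$, while for an odd prime $p \mid |m|$ a direct application of quadratic reciprocity shows $\leg{p}{n} = \leg{p}{n'}$ whenever $n \equiv -n' \pmod{4p}$, and multiplying the prime-power contributions gives the claim. Equivalently, inverting modulo $4|m|$ reduces (7) to the assertion that $\leg{|m|}{n} = 1$ for every positive odd $n \equiv -1 \pmod{4|m|}$, again a short prime-by-prime reciprocity check, using $\leg{2}{7} = 1$ and $\leg{p}{n} = (-1)^{(p-1)/2}(-1)^{(p-1)/2} = 1$ for odd $p \mid |m|$. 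This reciprocity bookkeeping is the one step that is not a purely formal consequence of Lemma~\ref{lem:jac}.
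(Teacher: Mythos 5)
Your proof is correct. It is organized around a single ``master formula'' $\leg{m}{d} = \sgn(m)^{(d-1)/2}\leg{|m|}{|d|}$ (for $m \neq 0$), which concentrates all the sign bookkeeping in one place and then reduces each assertion to Lemma~\ref{lem:jac}; the paper instead works directly from the definition $\leg{m}{d} = \sgn(m)\leg{m}{|d|}$ case by case, so for $(1)'$--$(6)'$ the two treatments are essentially equivalent (and your identification of the single exceptional case in $(1)'$, forced by the convention $\leg{0}{-1}=1$, matches the statement exactly). The genuine divergence is in (7). The paper first uses $(1)'$ to reduce to the three base cases $m=-1$, $m=2$, and $m$ positive odd, and then dispatches the last case in one line by invoking the already-established extended reciprocity $(5)'$ --- which applies uniformly because $m>0$ guarantees ``at least one positive'' --- together with (6) for the positive modulus $m$; no case split on the signs of $d,d'$ is needed. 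You instead reduce to positive denominators via the master formula and must then prove the negation-invariance $\leg{|m|}{n} = \leg{|m|}{n'}$ for positive odd $n \equiv -n' \pmod{4|m|}$ by a prime-by-prime reciprocity computation. Both routes are valid and both ultimately rest on quadratic reciprocity; yours is somewhat longer at this step but makes the role of reciprocity explicit, whereas the paper's shows that the work already invested in $(5)'$ buys (7) almost for free.
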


\begin{proof}
  Checking that (1)--(4) go over as explained is straightforward. For example, if $d < 0$ then $\leg{-1}{d} = -\leg{-1}{-d} = -(-1)^{\frac{-d-1}{2}} = (-1)^{\frac{d-1}{2}}$, proving (3). We leave (1), (2), (4) to the reader. Property $(6)'$ is similarly straightforward since if $d < 0$ then  
  \[  \sgn(m) \tleg{m}{d} = \tleg{m}{|d|} = \tleg{m'}{|d|} = \sgn(m') \tleg{m'}{d} .\]

  To check $(5)'$ we can assume $d < 0$ and consider separately the cases $d' > 0$ and $d' < 0$. If $d' > 0$ then
  \[ \tleg{d}{d'}\tleg{d'}{d} = (-1)^{\frac{d'-1}{2}} \tleg{|d|}{d'} \tleg{d'}{|d|} = (-1)^{\frac{d'-1}{2}}(-1)^{\frac{d'-1}{2}\frac{-d-1}{2} } = (-1)^{\frac{d'-1}{2}\frac{d-1}{2} }.\] If instead $d' < 0$ then
   \[ \tleg{d}{d'}\tleg{d'}{d} = (-1)^{\frac{d'-1}{2} + \frac{d-1}{2}} \tleg{|d|}{|d'|} \tleg{|d'|}{|d|} =  - (-1)^{\frac{d'-1}{2}\frac{d-1}{2} }.\] 

Finally we check (7). If $d \equiv d' \pmod{4m}$ then $d \equiv d' \pmod{4m'}$ for any divisor $m' \mid m$;\ so by $(1)'$ it suffices to prove the result separately for $m = -1$, $m=2$, and $m$ positive and odd. The first two cases are immediate from (3), (4). Suppose that $m$ is positive and odd. Then
\[ \tleg{m}{d} = \tleg{d}{m} (-1)^{\frac{d-1}{2}\frac{m-1}{2}} = \tleg{d'}{m}  (-1)^{\frac{d'-1}{2}\frac{m-1}{2}}  = \tleg{m}{d'} \]
where the first and third equalities are property $(5)'$ (valid since $m > 0$), and the middle equality uses property (6) to deduce $\leg{d}{m} = \leg{d'}{m}$ (again noting $m > 0$), along with $d' \equiv d \pmod{4}$ to see that the powers of $-1$ are the same.
    \end{proof}

\section*{Data availability}

Data sharing is not applicable to this article.

\bibliographystyle{math} 
\bibliography{nt}

\begin{thebibliography}{New2}

\bibitem[Lig]{MR417060}
G\'{e}rard Ligozat.
\newblock {\em Courbes modulaires de genre {$1$}}, volume~43 of {\em Bulletin
  de la Soci\'{e}t\'{e} Math\'{e}matique de France. Supplement M\'{e}moire}.
\newblock Soci\'{e}t\'{e} Math\'{e}matique de France, Paris, 1975.
\newblock Suppl\'{e}ment au Bull. Soc. Math. France, Tome 103, no. 3.

\bibitem[New1]{MR91352}
Morris Newman.
\newblock {Construction and application of a class of modular functions}.
\newblock {\em Proc. London Math. Soc. (3)} {\bf 7}(1957), 334--350.

\bibitem[New2]{MR107629}
Morris Newman.
\newblock {Construction and application of a class of modular functions. {II}}.
\newblock {\em Proc. London Math. Soc. (3)} {\bf 9}(1959), 373--387.

\bibitem[Ono]{MR2020489}
Ken Ono.
\newblock {\em The {W}eb of {M}odularity: {A}rithmetic of the {C}oefficients of
  {M}odular {F}orms and {$q$}-series}, volume 102 of {\em CBMS Regional
  Conference Series in Mathematics}.
\newblock Published for the Conference Board of the Mathematical Sciences,
  Washington, DC; by the American Mathematical Society, Providence, RI, 2004.

\bibitem[Yan]{MR2070444}
Yifan Yang.
\newblock {Transformation formulas for generalized {D}edekind eta functions}.
\newblock {\em Bull. London Math. Soc.} {\bf 36}(2004), 671--682.

\end{thebibliography}

\end{document}